\date{}
\crefname{appsec}{Appendix}{Appendices}
\theoremstyle:=definition,remark,plain\do{%
        \expandafter\g@addto@macro\csname th@\theoremstyle\endcsname{%
            \addtolength\thm@preskip\parskip
            }%
        }
\theoremstyle{plain}
\newtheorem{thm}{\protect\theoremname}
\crefname{thm}{Theorem}{Theorems}
\theoremstyle{definition}
\newtheorem{defn}[thm]{\protect\definitionname}
\theoremstyle{definition}
\newtheorem{claim}[thm]{\protect\claimname}
\theoremstyle{definition}
\theoremstyle{plain}
\crefname{prop}{Proposition}{Propositions}
\theoremstyle{plain}
\theoremstyle{plain}
\newtheorem{lem}[thm]{\protect\lemmaname}
\crefname{lem}{Lemma}{Lemmas}
\theoremstyle{plain}
\theoremstyle{definition}
\let\originalleft\left
\let\originalright\right
\renewcommand{\left}{\mathopen{}\mathclose\bgroup\originalleft}
\renewcommand{\right}{\aftergroup\egroup\originalright}
\renewcommand*{\UrlTildeSpecial}{%
  \do\~{%
    \mbox{%
      \fontfamily{ptm}\selectfont
      \textasciitilde
    }%
  }%
}%
\let\Url@force@Tilde\UrlTildeSpecial
\tikzstyle{vertex}=[circle,draw=black,fill=black,inner sep=0,minimum size=0.2cm,text=white,font=\footnotesize]
\tikzset{every loop/.style={min distance=50,in=50,out=130,looseness=7}}
\providecommand{\claimname}{Claim}
\providecommand{\conjecturename}{Conjecture}
\providecommand{\corollaryname}{Corollary}
\providecommand{\definitionname}{Definition}
\providecommand{\examplename}{Example}
\providecommand{\lemmaname}{Lemma}
\providecommand{\propositionname}{Proposition}
\providecommand{\remarkname}{Remark}
\providecommand{\theoremname}{Theorem}
\begin{document}

\title{Non-trivially intersecting multi-part families}

\author{Matthew Kwan
\thanks{Department of Mathematics, ETH, 8092 Zurich. Email: \href{mailto:matthew.kwan@math.ethz.ch} {\nolinkurl{matthew.kwan@math.ethz.ch}}.}
\and Benny Sudakov
\thanks{Department of Mathematics, ETH, 8092 Zurich. Email: \href{mailto:benjamin.sudakov@math.ethz.ch} {\nolinkurl{benjamin.sudakov@math.ethz.ch}}.}
\and Pedro Vieira
\thanks{Department of Mathematics, ETH, 8092 Zurich. Email: \href{mailto:pedro.vieira@math.ethz.ch} {\nolinkurl{pedro.vieira@math.ethz.ch}}.}
}

\maketitle
\global\long\def\RR{\mathbb{R}}
\global\long\def\NN{\mathbb{N}}
\global\long\def\ZZ{\mathbb{Z}}
\global\long\def\range#1{\left[#1\right]}
\global\long\def\squplus{\sqcup}
\global\long\def\F{\mathcal{F}}
\global\long\def\A{\mathcal{A}}
\global\long\def\B{\mathcal{B}}
\global\long\def\G{\mathcal{G}}
\global\long\def\C{\mathcal{C}}
\global\long\def\cH{\mathcal{H}}
\global\long\def\HM{\mathrm{HM}}
\global\long\def\EKR{\mathrm{EKR}}
\global\long\def\alt{\mathrm{alt}}
\global\long\def\ll{{\boldsymbol\ell}}
\global\long\def\kunif{\prod_{s}\!\binom{\range{n_s}}{k_s}}

\begin{abstract}
We say a family of sets is \emph{intersecting} if any two of its sets intersect, and we say it is \emph{trivially intersecting} if there is an element which appears in every set of the family. In this paper we study the maximum size of a non-trivially intersecting family in a natural ``multi-part'' setting. Here the ground set is divided into parts, and one considers families of sets whose intersection with each part is of a prescribed size. Our work is motivated by classical results in the single-part setting due to Erd\H os, Ko and Rado, and Hilton and Milner, and by a theorem of Frankl concerning intersecting families in this multi-part setting. In the case where the part sizes are sufficiently large we determine the maximum size of a non-trivially intersecting multi-part family, disproving a conjecture of Alon and Katona. 

\end{abstract}

\section{Introduction}

We say that a family of sets $\F$ is \emph{intersecting} if the intersection of any two of its sets
is non-empty. Moreover,  we say that $\F$ is \emph{trivially
intersecting} if there is an element $i$ such that $i\in F$ for each set $F\in \F$. The Erd\H{o}s-Ko-Rado theorem~\cite{EKR61} says that if $\F\subseteq\binom{\range n}{k}$
is an intersecting family of $k$-element subsets of an $n$-element ground set, and if $1\le k\le n/2$,
then
\[
\left|\F\right|\le\binom{n-1}{k-1}.
\]
This bound is sharp: it is attained, for example, by the trivially intersecting family $\F^{\EKR}\left(n,k\right)$
consisting of all $k$-element subsets of $[n]$ which contain 1. In fact, for $k<n/2$ this is essentially the only extremal family. We remark that if $k>n/2$ then $\binom{\range n}{k}$ itself is intersecting.

The Erd\H os-Ko-Rado theorem is of fundamental importance in extremal
set theory, and many related questions have been asked and answered.
In particular, Hilton and Milner~\cite{HM67}
proved a stability version of the Erd\H os-Ko-Rado theorem, showing that for $2\le k< n/2$,
the maximum size of a non-trivially intersecting family $\F\subseteq\binom{\range n}{k}$
is
\[
M^{\HM}\left(n,k\right):=\binom{n-1}{k-1}-\binom{n-k-1}{k-1}+1.
\]
This bound is sharp: it is attained, for example, by the family $\F^{\HM}\left(n,k\right)$
consisting of the set $F=\left\{ 2,\dots,k+1\right\} $, in addition to all
possible sets that contain 1 and intersect $F$. Note that this family is
significantly smaller than the Erd\H os-Ko-Rado bound. In particular, for constant $k$ and $n\to\infty$,
we have $\left|\F^{\HM}\left(n,k\right)\right|=o\left(\left|\F^{\EKR}\left(n,k\right)\right|\right)$. We remark that if $k=1$ then every intersecting family is trivially intersecting.

\subsection{Multi-part intersecting families}

A natural ``multi-part'' extension of the Erd\H{o}s-Ko-Rado problem was introduced by Frankl~\cite{Fra96}, in connection with a result of Sali~\cite{Sal92} (see also~\cite{FT93}). For $p\ge1$ and
$n_{1},\dots,n_{p}\ge1$, our ground set is $\range{\sum_{s}n_{s}}=\{1,2,\dots,\sum_{s}n_{s}\}$. We interpret this ground set as the disjoint union of $p$ parts $\range{n_{1}},\dots,\range{n_p}$ and we write $\range{\sum_{s}n_{s}}=\bigsqcup_{s}\range{n_{s}}$. More generally, for sets $F_{1}\in2^{\range{n_{1}}},\dots,F_{p}\in2^{\range{n_{p}}}$ let $\bigsqcup_{s}F_{s}$ be the subset of $\bigsqcup_{s}\range{n_{s}}$ with $F_{s}$ in
part $s$,
 and for families $\F_{1}\subseteq 2^{\range{n_{1}}},\dots,\F_{p}\subseteq 2^{\range{n_{p}}}$ let $\prod_{s}\F_s=\{\bigsqcup_{s} F_s:F_s\in\F_s\}$. Consider $k_1\in\range{n_1},\dots k_p\in\range{n_p}$, so that $
\kunif$ is the collection of all subsets of $\bigsqcup_{s}\range{n_{s}}$ which have exactly $k_s$ elements in each part $s$. Families of the form $\F\subseteq \kunif$ are the natural generalization of $k$-uniform families to the multi-part setting. Note that a multi-part family is intersecting if any two of its sets intersect in at least one of the parts.

Frankl proved that for any $p\ge1$, any $n_1,\dots,n_p$ and any $k_1,\dots,k_p$ satisfying $1\le k_s\le n_s/2$, the maximum size of a multi-part intersecting family $\F\subseteq \kunif$ is
\[
\max_{t\in\range p}\binom{n_{t}-1}{k_{t}-1}\prod_{s\ne t}\binom{n_{s}}{k_{s}}=\left(\max_{t\in \range p}\frac{k_{t}}{n_{t}}\right)\prod_{s=1}^{p}\binom{n_{s}}{k_{s}}.
\]
This bound is sharp: it is attained, for example, by a product family of the form
\[
\binom{\range{n_{1}} }{ k_{1}}\times\dots\times\binom{\range{n_{t-1}} }{ k_{t-1}}\times\F^{\EKR}\left(n_{t},k_{t}\right)\times\binom{\range{n_{t+1}} }{ k_{t+1}}\times\dots\times\binom{\range{n_{p}}}{ k_{p}}.
\]

We remark that Frankl's theorem can be interpreted as a result about the size of the largest independent set in a certain product graph. 
Recall that the \emph{Kneser graph} $\mathrm{KG}_{n,k}$ is the graph
on the vertex set $\binom{[n]}{k}$, with an edge between each
pair of disjoint sets. An intersecting subfamily of $\binom{[n]}{k}$
corresponds to an independent set in $\mathrm{KG}_{n,k}$,
and an intersecting subfamily of $\kunif$
corresponds to an independent set in the graph (tensor) product
$\mathrm{KG}_{n_{1},k_{1}}\times\dots\times\mathrm{KG}_{n_{p},k_{p}}$. Therefore Frankl's theorem
is an immediate consequence of the general fact that $\alpha\left(G\times H\right)=\max\left\{ \alpha\left(G\right)\left|H\right|,\left|G\right|\alpha\left(H\right)\right\} $
for vertex-transitive graphs $G,H$. This fact was conjectured
by Tardif~\cite{Tar98} and recently proved by Zhang~\cite{Zha12}. 
The study of independent sets in graph products, particularly graph powers, has a long history, and there are many 
research papers devoted to this topic. In particular we mention the work of Alon, Dinur, Friedgut and Sudakov~\cite{ADFS04} characterizing maximum and near-maximum independent sets in powers of certain graphs, and the subsequent work of Dinur, Friedgut and Regev~\cite{DFR08} approximately characterizing \emph{all} independent sets in powers of a much wider range of graphs.

Alon and Katona~\cite{AK16} independently  rediscovered Frankl's
multi-part variant of the Erd\H os-Ko-Rado problem, and proved the
same result. Furthermore, they also asked for the maximum size of a non-trivially
intersecting family in this setting, and made the natural conjecture that for $p=2$, the maximum possible size is
\[
\max\left\{M^\HM(n_1,k_1)\binom{n_2}{k_2},\;\binom{n_1}{k_1}M^\HM(n_2,k_2)\right\},
\]
attained by one of the ``product'' families
\[
\F^{\HM}\left(n_{1},k_{1}\right)\times\binom{\range{n_2}}{k_2},\quad\binom{\range{n_1}}{k_1}\times\F^{\HM}\left(n_{2},k_{2}\right).
\]
This conjecture also appeared in a recent paper of Katona~\cite{Kat17}, which generalized Frankl's theorem in a different direction.

Somewhat surprisingly, this natural guess is not true in general. Consider
the family that contains $F=\left\{ 2,\dots,k_{1}+1\right\} \sqcup\range{k_{2}}$
in addition to every set $A=A_1\sqcup A_2\in\binom{\range{n_{1}}}{k_{1}}\times\binom{\range{n_{2}}}{k_{2}}$
such that $A_1$ contains 1 and $A$ intersects
$F$. Unless $k_{1}=k_{2}=1$ (in which case there is no non-trivially
intersecting family), this family is non-trivially intersecting and
has size
\[
M^{\text{alt}}(n_1,n_2,k_1,k_2):=\binom{n_{1}-1}{ k_{1}-1}\binom{n_{2}}{ k_{2}}-\binom{n_{1}-k_{1}-1}{k_{1}-1}\binom{n_{2}-k_{2}}{k_{2}}+1.
\]
Observe that this can be larger than Alon and Katona's conjecture (consider for example the case when $n_{1}=n_{2}=5$ and $k_{1}=k_{2}=2$).

If $n_1,n_2$ are large, we are able to prove that either one of the product constructions above, or this additional construction (or the corresponding construction where the roles of the parts are swapped) is best possible. That is,
\begin{align*}
\left|\F\right| & \le \max\left\{
M^\HM(n_1,k_1)\binom{n_2}{k_2},\;\binom{n_1}{k_1}M^\HM(n_2,k_2),\;M^{\text{alt}}(n_1,n_2,k_1,k_2),\;M^{\text{alt}}(n_2,n_1,k_2,k_1)
\right\}.
\end{align*}
This will be an immediate corollary of a more general theorem giving the maximum size of a non-trivially intersecting multi-part family for any number of parts. To state this theorem we define a variety of potentially extremal families. Consider any $p\ge1$ and any $k_1,\dots,k_p$ satisfying $1\le k_s\le n_s/2$. For any $t\in\range p$ and $S\subseteq [p]\setminus \{t\}$, define the family $\F_{t,S}^{\HM}=\F_{t,S}^{\HM}\left(n_{1},\dots,n_{p},k_{1},\dots,k_{p}\right)$ as follows. If $k_t>1$ then $\F_{t,S}^{\HM}$ consists of all sets $\bigsqcup_sF_s\in\kunif$ such that
one of the following conditions is satisfied:
\begin{itemize}
\item $F_t=\left\{ 2,3,\dots,k_{t}+1\right\} $ and $F_s=\range{k_{s}}$
for all $s\in S$, or,
\item $1\in F_t$, and either $F_t$ intersects $\left\{ 2,3,\dots,k_{t}+1\right\} $
or $F_s$ intersects $\range{k_{s}}$ for some $s\in S$.
\end{itemize}
If instead $k_{t}=1$ then $\F_{t,S}^{\HM}$ consists of every set $\bigsqcup_sF_s\in\kunif$ such that
one of the following conditions is satisfied:
\begin{itemize}
\item $F_s=\range{k_{s}}$
for all $s\in S$, or,
\item $F_t=\{1\}$ and $F_s$ intersects $\range{k_{s}}$ for some $s\in S$.
\end{itemize}
Unless $S=\emptyset$ and $k_t=1$, or $S=\{r\}$ and $k_t=k_r=1$ (for some $r$), the family $\F_{t,S}^{\HM}$ is non-trivially intersecting. In the case $k_t>1$ it has size
\begin{align*}
 M_{t,S}^{\HM}=&M_{t,S}^{\HM}\left(n_{1},\dots,n_{p},k_{1},\dots,k_{p}\right)\\
 :=&\left(\binom{n_{t}-1}{k_{t}-1}\prod_{s\in S}\binom{n_{s}}{k_{s}}-\binom{n_{t}-k_{t}-1}{k_{t}-1}\prod_{s\in S}\binom{n_{s}-k_{s}}{k_{s}}+1\right)\prod_{s\notin S\cup\left\{ t\right\} }\binom{n_{s}}{k_{s}},
\end{align*}
and in the case $k_t=1$, it has size
\begin{align*}
 & M_{t,S}^{\HM}=\left(\prod_{s\in S}\binom{n_{s}}{k_{s}}-\prod_{s\in S}\binom{n_{s}-k_s}{k_{s}}+\left(n_{t}-1\right)\right)\prod_{s\notin S\cup\left\{ t\right\} }\binom{n_{s}}{k_{s}}.
\end{align*}

Our main theorem is as follows, showing that one of the families $\F^\HM_{t,S}$ is extremal if the $n_s$ are sufficiently large.
\begin{thm}
\label{thm:asymptotic}For any $k_{1},\dots,k_{p}\ge1$, there is
$n_{0}=n_{0}\left(k_{1},\dots,k_{p}\right)$ such that if $n_{1},\dots,n_{p}\ge n_{0}$
and if $\F\subseteq\kunif$ is
a non-trivially intersecting family, then
\begin{align*}
\left|\F\right| & \le M^{\max}\left(n_{1},\dots,n_{p},k_{1},\dots,k_{p}\right):= \max M_{t,S}^{\HM}
\end{align*}
where the maximum is over all $t\in \range p$ and $S\subseteq [p]\setminus \{t\}$, except the case $S=\emptyset$ if $k_t=1$, and the case $S=\{r\}$ if $k_t=k_r=1$, for some $r$.
\end{thm}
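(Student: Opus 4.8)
The plan is to reduce the multi-part problem to the single-part Hilton--Milner theorem by a ``shifting-plus-structure'' argument, exploiting the fact that the $n_s$ are enormous compared to the $k_s$. First I would apply the standard compression (shifting) operators within each part to $\F$, noting that shifting preserves the size of $\F$, preserves the property of being intersecting, and — crucially — preserves the property of being \emph{non}-trivially intersecting, or else collapses $\F$ into a trivially intersecting family (in which case we are comparing against a strictly smaller quantity and there is nothing to prove). After shifting we may assume $\F$ is compressed in every coordinate, which gives strong structural control: intersecting pairs can be witnessed ``greedily,'' and the sets of $\F$ cannot avoid the initial segments $[k_s]$ too wildly.

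The heart of the argument is a stability/bootstrapping dichotomy. Fix the largest ``link'' of $\F$: choose a part $t$ and an element $i$ in part $t$ maximizing the number of sets of $\F$ containing $i$; by averaging and Frankl's theorem this link has size within a constant factor of $\prod_s \binom{n_s}{k_s}$. Two cases arise. \emph{Case 1: the link is very large}, say at least $(1-\varepsilon)\binom{n_t-1}{k_t-1}\prod_{s\ne t}\binom{n_s}{k_s}$ for a suitable small $\varepsilon=\varepsilon(k_1,\dots,k_p)$. Then almost every set of $\F$ passes through the single element $i$; the few sets that do not must all intersect a bounded ``kernel'' inside the link. Here I would run a Hilton--Milner-type analysis \emph{relative to the coordinate $t$}: the sets not through $i$ form an intersecting cross-family with the link, and because $n_t$ is huge, a Bollob\'as-type / Hilton--Milner counting argument forces the configuration to be (a sub-family of) one of the $\F^\HM_{t,S}$, with $S$ determined by which other parts the ``exceptional'' set $F=\bigsqcup_s F_s$ uses its initial segments in. This is where $M^\HM_{t,S}$ and the split between $k_t>1$ and $k_t=1$ enters. \emph{Case 2: no link is that large}, i.e.\ every element lies in at most $(1-\varepsilon)\binom{n_t-1}{k_t-1}\prod_{s\ne t}\binom{n_s}{k_s}$ sets. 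Then I claim $|\F|$ is bounded away from $M^{\max}$ outright: a double-counting / kernel argument (in the spirit of the Hilton--Milner proof, or of the delta-system / sunflower method) shows that an intersecting family with no dominant element and with all $n_s$ large must be much smaller than $\max_{t}\frac{k_t}{n_t}\prod_s\binom{n_s}{k_s}$ — essentially because two sets meeting in a ``spread out'' way are rare when the ground set is large, so the family cannot be both intersecting and large without a popular element. Comparing with the explicit value of $M^{\max}$ (which is of order $\prod_s\binom{n_s}{k_s}$ divided by a constant) finishes this case.

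Concretely, the steps in order are: (i) set up the compressions and verify they reduce us to a compressed $\F$; (ii) prove the ``popular element or small'' dichotomy with an explicit threshold $\varepsilon$, using Frankl's bound as the starting estimate; (iii) in the popular-element case, isolate the at most boundedly-many ``types'' of exceptional sets and, by a Hilton--Milner-style exchange argument applied in part $t$, show $\F$ is contained in some $\F^\HM_{t,S}$ up to lower-order terms, then match sizes exactly; (iv) in the no-popular-element case, prove the crude upper bound $|\F| \le (1-\delta)\prod_s\binom{n_s}{k_s}/\max_t(n_t/k_t)$ for some $\delta>0$ and observe this is below $M^{\max}$ once the $n_s$ exceed $n_0$; (v) assemble the cases and choose $n_0$ large enough that all the ``$O(n^{-1})$''-type error terms in (iii) and (iv) are dominated by the ``$+1$'' and ``$+(n_t-1)$'' gaps appearing in the formulas for $M^\HM_{t,S}$.

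I expect the main obstacle to be step (iii): pinning down the extremal structure \emph{exactly}, rather than up to $o(1)$. The subtlety is that the families $\F^\HM_{t,S}$ for different $S$ differ only in lower-order terms (the $+1$ or the $\prod_{s\in S}\binom{n_s-k_s}{k_s}$ correction), so an approximate structural result is not enough — one must show that once $\F$ ``commits'' to a popular element $i$ in part $t$, the set of exceptional sets is \emph{exactly} governed by a single configuration $F=\bigsqcup_s F_s$ and the parts $S$ where $F$ occupies an initial segment, with no room for a hybrid that would beat every $\F^\HM_{t,S}$. Handling the $k_t=1$ boundary case (where $\F^\HM_{t,\emptyset}$ degenerates and the ``$+(n_t-1)$'' term replaces the ``$+1$'') will require a separate, slightly more careful count, since there the ``popular element'' can be traded against a whole part's worth of freedom.
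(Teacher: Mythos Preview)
Your proposal has two genuine gaps, one at each end of the argument.

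First, the shifting reduction is not as innocuous as you suggest. You write that shifting ``preserves the property of being non-trivially intersecting, or else collapses $\F$ into a trivially intersecting family (in which case we are comparing against a strictly smaller quantity and there is nothing to prove).'' This is backwards: if a shift of a maximum-size non-trivially intersecting $\F$ becomes trivially intersecting, the shifted family can be \emph{larger} than $M^{\max}$ (trivially intersecting families are much bigger), and you learn nothing about $|\F|$. The paper spends real effort (its \cref{thm:shifted-ok}, via three lemmas) proving that some maximum-size non-trivially intersecting family \emph{is} shifted; the argument is delicate, especially when $k_t=1$ for several parts $t$, where a hypercube-parity analysis is needed. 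You cannot skip this.

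Second, and more seriously, your order-of-magnitude picture of $M^{\max}$ is wrong, which makes your Case~2 vacuous. You assert that $M^{\max}$ is ``of order $\prod_s\binom{n_s}{k_s}$ divided by a constant,'' and that in the no-popular-element case it suffices to prove $|\F|\le (1-\delta)\prod_s\binom{n_s}{k_s}/\max_t(n_t/k_t)$. But in fact $M^{\max}=\Theta\bigl((\prod_s n_s^{k_s})/n_{\min}^2\bigr)$ (or $\Theta\bigl((\prod_s n_s^{k_s})/(n_{(1)}n_{(2)})\bigr)$ when some $k_s=1$), which is smaller than Frankl's bound by a full factor of $n$. So a bound of the form $(1-\delta)$ times Frankl's bound is useless here --- it exceeds $M^{\max}$ by a factor tending to infinity. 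The paper avoids this entirely: it does not run a popular-element dichotomy but instead works with the projection of the shifted family onto the first $2k_s$ elements of each part, singles out the subfamily $\F^*$ whose projection has size exactly $2$, and cases on whether the ($2$-uniform, hence highly constrained) projected family $\cQ(\F^*)$ is a star or a triangle. In the triangle case it reduces to at most three parts and does explicit asymptotic comparisons; in the star case it proves an exact structural claim and then optimizes over the parameters via a unimodality argument. Your dichotomy would need to be replaced by something of this flavor to get down to the correct scale.
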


We remark that if $p\le 2$ and $k_s=1$ for all $s$, then there is no non-trivially intersecting family, in which case \cref{thm:asymptotic} holds vacuously. We also remark that none of the values $M_{t,S}^{\HM}$ in \cref{thm:asymptotic} are redundant. For example, for any $p$, any $t\in [p]$ and any $S\subseteq [p]\setminus \{t\}$, one can show (via a rather tedious computation) that $M_{t,S}^{\HM}$ is the unique maximum in the expression in \cref{thm:asymptotic} in the following regime. Suppose that all the $k_s=2$, and $n_t$ is sufficiently large, and all the $n_s$, for $s\in S$, are equal to $n_t+1$, and all the $n_s$, for $s\notin S\cup \{t\}$, are equal and sufficiently large relative to $n_t$. Intuitively speaking, if the $n_s$ and $k_s$ are about the same size then we should take $S=[p]$; we should exclude parts from $S$ only when the parts are quite ``imbalanced''.

The proof of \cref{thm:asymptotic} is conceptually rather simple, though the details are nontrivial. We first define a notion of ``shiftedness'' and in \cref{sec:shifting} we show that there is a non-trivially intersecting family of maximum size which is shifted. So, it will suffice to prove \cref{thm:asymptotic} for shifted families, which we do in \cref{sec:proof}. Our notion of shiftedness forces enough structure that it is possible to prove that a maximum-size non-intersecting family must be of a certain parameterized form. We can explicitly write the size of the family as a function of the parameters, and then it remains to optimize this expression over choices of the parameters.

Finally, we remark that the case where $n_1=\dots=n_p=n$ and $k_1=\dots=k_p=1$ is of special interest. In this case, $\kunif$ corresponds to $K^p_n$ (the $p$th tensor power of an $n$-vertex clique), and an intersecting family $\F\subseteq\kunif$ corresponds to an independent set in $K^p_n$. The problem of characterizing maximum independent sets in $K^p_n$ was solved by Greenwell and Lov\'asz~\cite{GL74} well before Frankl considered the general multi-part Erd\H os-Ko-Rado problem, and there has actually already been interest in proving stability theorems in this setting. Indeed, a result in the above-mentioned paper by Alon, Dinur, Friedgut and Sudakov~\cite{ADFS04}, improved by Ghandehari and Hatami~\cite{GH08}, says that if an independent set in $K^p_n$ has almost maximum size, then it is ``close'' in structure to a maximum family. When $n$ is large relative to $p$, then \cref{thm:asymptotic} gives a much stronger stability result. Actually, in this simplified setting one can use the machinery in \cref{sec:shifting} to give a simple proof of a non-asymptotic version of \cref{thm:asymptotic}.

\begin{thm}
\label{thm:k1}
For any $n\ge 2$ and $p\ge 3$, suppose $\F\subseteq \binom{[n]}{1}^{p}$ is a non-trivially intersecting family. Then
\[
|\F|\le \F^\HM_{1,\range p\setminus \{1\}}(n,\dots,n,1,\dots,1)=n^{p-1}-(n-1)^{p-1}+n-1.
\]
\end{thm}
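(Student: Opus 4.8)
The plan is to first apply the shifting machinery of \cref{sec:shifting} and then analyse shifted families by hand. Identify a set $\bigsqcup_s\{a_s\}\in\binom{[n]}{1}^p$ with the vector $(a_1,\dots,a_p)\in[n]^p$; then two sets intersect exactly when the corresponding vectors agree in some coordinate, and a family is trivially intersecting exactly when some coordinate is constant on it. When all $k_s=1$ the compression operators of \cref{sec:shifting} simply decrease the value of a single coordinate, so a shifted family is precisely a down-set of $[n]^p$ in the coordinatewise order. By \cref{sec:shifting} there is a maximum-size non-trivially intersecting $\F$ that is shifted, so I would assume from now on that $\F$ is such a down-set; we may also assume $\F\ne\emptyset$, so that the all-ones vector $\mathbf 1$ lies in $\F$. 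Since $\F$ is non-trivially intersecting, for each coordinate $s$ it is not contained in $\{a_s=1\}$, hence it contains a vector with $a_s\ge2$ and so, being a down-set, also the vector $f_s$ that is $2$ in coordinate $s$ and $1$ elsewhere.

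Two easy consequences of the intersecting property would then be recorded. First, every vector of $\F$ has at least two coordinates equal to $1$: it agrees with $\mathbf 1$ somewhere, so has some coordinate equal to $1$; and if it had exactly one, say its $s$th, then (using $p\ge3$) it would be distinct from $f_s\in\F$ and differ from $f_s$ in every coordinate, a contradiction. Second, for any fixed coordinate $v$, every vector of $\F$ with $a_v=1$ has a further coordinate equal to $1$, whence $|\{F\in\F:a_v=1\}|\le n^{p-1}-(n-1)^{p-1}$.

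Call a coordinate $v$ \emph{heavy} if $\F$ contains a vector with $a_v\ge2$ and at least one further coordinate $\ge2$. If some coordinate $v$ is not heavy, then every $F\in\F$ with $a_v\ge2$ has $a_u=1$ for all $u\ne v$, so there are at most $n-1$ such vectors, and combined with the previous bound this gives $|\F|\le (n^{p-1}-(n-1)^{p-1})+(n-1)$, which is exactly what we want (one can in fact check that in this case $\F\subseteq\F^\HM_{v,[p]\setminus\{v\}}$). In particular this disposes of the case $p=3$, in which supports have size at most $1$ and so no coordinate is heavy.

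It remains to handle the case in which every coordinate is heavy, and this is the crux. I would proceed by induction on $p$, the base case $p=3$ being as above. Split $\F$ by its last coordinate into $A=\{F:a_p\ge2\}$ and $C=\{F:a_p=1\}$, and project to the first $p-1$ coordinates to obtain down-sets $\widehat A,\widehat C\subseteq[n]^{p-1}$. One checks that $\widehat A$ is intersecting (since $(z,2)$ and $(z',1)$ both lie in $\F$ for any $z,z'\in\widehat A$), that $\widehat C$ consists of vectors with a coordinate equal to $1$, and that $\widehat A$ and $\widehat C$ are cross-intersecting; moreover $|C|=|\widehat C|$ and $|A|\le(n-1)|\widehat A|$. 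The goal is to bound $|\F|=|\widehat C|+|A|$ using the inductive bound on $|\widehat A|$ together with the cross-intersecting relation between $\widehat A$ and $\widehat C$. The main obstacle is that the bounds $|\widehat C|\le n^{p-1}-(n-1)^{p-1}$ and $|\widehat A|\le n^{p-2}-(n-1)^{p-2}+n-1$, applied separately, are far too weak: the real content is that $\widehat C$ must be substantially smaller when $\widehat A$ is large, and quantifying this trade-off sharply is precisely the kind of parametrised analysis carried out in the proof of \cref{thm:asymptotic}, where one pins down the ``shape'' of a maximum-size shifted family and optimises over the parameters. Carried through, this yields the stated value $n^{p-1}-(n-1)^{p-1}+n-1$ directly; for equal part sizes one has $M^\HM_{t,S}=(n^{|S|}-(n-1)^{|S|}+n-1)\,n^{p-1-|S|}$, which over the admissible range $2\le|S|\le p-1$ is maximised at $|S|=p-1$, so this is consistent with \cref{thm:asymptotic}.
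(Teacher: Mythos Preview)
Your argument is correct and complete up through the ``non-heavy'' case, and the observation that every vector in a shifted maximum family has at least two coordinates equal to $1$ is exactly the right starting point. But the ``all heavy'' case is a genuine gap. You set up an induction on $p$, identify the projected families $\widehat A,\widehat C$, correctly note that the separate bounds are far too weak, and then defer to ``the kind of parametrised analysis carried out in the proof of \cref{thm:asymptotic}''. That analysis, however, is asymptotic in the part sizes and does not produce exact inequalities valid for all $n\ge 2$; and you never establish that $\widehat A$ is \emph{non-trivially} intersecting, so even the inductive hypothesis is not available. No concrete trade-off inequality between $|\widehat A|$ and $|\widehat C|$ is ever stated or proved, so as it stands the proof does not close.

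The paper's proof avoids this difficulty by pushing your ``at least two $1$-coordinates'' observation one step further. Rather than inducting on $p$, it looks at the family $\mathcal R(\F)=\{\,\{s:a_s=1\}:(a_1,\dots,a_p)\in\F\,\}\subseteq 2^{[p]}$ of support sets and shows (from shiftedness and non-triviality of $\F$) that $\mathcal R(\F)$ is itself a non-trivially intersecting family. Since $\F\subseteq\{F:R(F)\in\mathcal R(\F)\}$, the problem reduces to maximising the weight $w(\mathcal R)=\sum_{R\in\mathcal R}(n-1)^{p-|R|}$ over non-trivially intersecting $\mathcal R\subseteq 2^{[p]}$. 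This is then done layer by layer: for each $m$ one pairs $R$ with its complement $[p]\setminus R$ to get $|\mathcal R_m|+|\mathcal R_{p-m}|\le\binom{p}{m}$, and for $2\le m\le p/2$ one uses the Erd\H os--Ko--Rado bound $|\mathcal R_m|\le\binom{p-1}{m-1}$. These two ingredients give the exact inequality $w(\mathcal R_m)+w(\mathcal R_{p-m})\le w(\mathcal R^*_m)+w(\mathcal R^*_{p-m})$ for every $m$, summing to the claimed bound. This replaces your missing cross-intersecting trade-off by a direct weighted-EKR argument on $2^{[p]}$, which is both exact and short.
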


A proof of \cref{thm:k1} appeared in a previous version of this paper (which can still be found on the arXiv), but we were since informed that \cref{thm:k1} is actually a special case of a theorem proved a few years ago by Borg~\cite{Bor13a} concerning intersecting families of ``signed sets''. His proof follows basically the same approach.

\section{Shifting for multi-part families}
\label{sec:shifting}

First we define our notion of shiftedness via a shifting operation that makes a family
more structured without interfering too much with its size or intersection
properties. Our shifting operation will be a multi-part adaptation of the well-known shifting operation for single-part families, introduced by Erd\H os, Ko and Rado~\cite{EKR61} (see also~\cite{Fra87} for a survey).
\begin{defn}
[Shifting]For $t\in [p]$, $1\le i<j\le n_{t}$, and $F=\bigsqcup_{s}F_{s}\in \kunif$,
define $S_{t}^{i,j}\left(F\right)$ as follows. If $i\in F_{t}$ or
$j\notin F_{t}$ then $S_{t}^{i,j}\left(F\right)=F$. Otherwise, $S_{t}^{i,j}\left(F\right)$
is defined by replacing $F_{t}$ with $\left(F_{t}\backslash\left\{ j\right\} \right)\cup\left\{ i\right\} $.
For a family $\F\subseteq\kunif$, define
\[
S_{t}^{i,j}\left(\F\right)=\left\{ S_{t}^{i,j}\left(F\right):F\in\F\right\} \cup\left\{ F:F,S_{t}^{i,j}\left(F\right)\in\F\right\} .
\]
That is, change $\F$ by shifting every $F\in\F$ except those that
would cause a conflict. We call the result a \emph{$t$-shift} (or simply a \emph{shift}) of $\F$. A family $\F\subseteq\kunif$ is \emph{$t$-shifted} if it is stable under $t$-shifts, and a family is \emph{shifted} if it is $t$-shifted for all $t\in \range p$.
\end{defn}
In the single-part case, it is well-known (see for example~\cite[Propositions~2.1-2]{Fra87}) that any shift of any intersecting family is again an intersecting family of the same size, and one can always obtain a shifted family by repeatedly applying shifting operations. The corresponding facts for the multi-part case immediately follow. In particular, it follows that we can always reach a shifted family by repeated shifts.



It is not in general true that a shift of a non-trivially intersecting family is again non-trivially intersecting, but in the single-part case, there is nevertheless a non-trivially intersecting family of maximum size which is shifted. This was recently proved by Kupavskii and Zakharov~\cite{KZ16}, and was also previously proved in a more general setting by Borg~\cite{Bor13b}. In the multi-part case this fact is still true, but is more difficult to prove, as follows.

\begin{thm}
\label{thm:shifted-ok}
For any $p\ge 1$, any $k_1,\dots,k_p$ and any $n_1,\dots,n_p$, if there is a non-trivially intersecting family $\F\subseteq\kunif$ then there is a maximum-size non-trivially intersecting family $\F'\subseteq\kunif$ which is shifted.
\end{thm}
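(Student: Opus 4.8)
The plan is to mimic the single-part argument of Kupavskii--Zakharov/Borg, but to handle the extra subtlety that a shift can destroy non-trivial intersection. First I would fix a maximum-size non-trivially intersecting family $\F\subseteq\kunif$ and, among all such families, choose one minimizing some potential that measures ``how unshifted'' it is --- for instance $\Phi(\F)=\sum_{F\in\F}\sum_s\sum_{i\in F_s} i$, which strictly decreases under any shift that actually moves a set. If $\F$ is already shifted we are done, so suppose some shift $S_t^{i,j}(\F)\neq \F$. Since shifting preserves size and the intersecting property (by the standard single-part facts, which lift coordinatewise), the only way $\F'=S_t^{i,j}(\F)$ can fail to be a valid competitor is that $\F'$ is \emph{trivially} intersecting; then there is an element $x$ lying in every set of $\F'$. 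The heart of the argument is to show that in that case $\F$ was actually not of maximum size, or else can be replaced by an equally large shifted family directly, giving a contradiction.

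So the main work is the following dichotomy on the ``bad'' element $x$. Write $x$ as element $a$ in part $r$. \textbf{Case 1: $(r,a)\neq(t,i)$ and $(r,a)\neq(t,j)$}, i.e. $x$ is untouched by the shift. Then $x\in G_r$ for every $G\in\F'$; but each $G\in\F'$ is either some $S_t^{i,j}(F)$ or some $F$ with $F,S_t^{i,j}(F)\in\F$, and in either case the part-$r$ coordinate is unchanged, so $x\in F_r$ for every $F\in\F$ as well --- contradicting that $\F$ is non-trivially intersecting. \textbf{Case 2: $(r,a)=(t,i)$.} Then $i\in G_t$ for all $G\in\F'$. I claim this forces $\F$ to already be $t$-shifted with respect to the pair $(i,j)$ (hence $\F'=\F$), contradicting our assumption. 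Indeed, if $F\in\F$ has $j\in F_t$, $i\notin F_t$, then $S_t^{i,j}(F)\in\F'$ contains $i$ by definition; the issue is sets $F\in\F$ with $i,j\notin F_t$, which are unchanged and must still contain $i$ --- impossible. So no $F\in\F$ has $i,j\notin F_t$; combined with the structure this should let me show the shift is trivial. \textbf{Case 3: $(r,a)=(t,j)$.} This is the genuinely delicate case: $j$ lies in every set of $\F'$, yet $\F$ contained sets with $j\notin F_t$ (otherwise nothing moves, except possibly via the conflict clause). Here I would argue that $\F$ can be replaced by the family obtained from $\F'$ by swapping the roles of $i$ and $j$ in part $t$ (an automorphism of $\kunif$), which is non-trivially intersecting (its common element would be $i$, but one checks directly it has none), has the same size, and has strictly smaller potential --- again contradicting minimality, unless one traces through and finds $\F$ was non-maximum.

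The key structural input throughout is a description, borrowed from the single-part theory, of what the shift does: $\F'=S_t^{i,j}(\F)$ and $\F$ have the same size, $\F'$ is intersecting, and for each set $G\in\F'$ either $G\in\F$ or ($j\in G_t\Rightarrow$ the ``swapped'' set $(G_t\setminus\{i\})\cup\{j\}$ lies in $\F$). I would isolate this as a short lemma (essentially Propositions~2.1--2 of \cite{Fra87} applied in coordinate $t$), and then the case analysis above reduces to checking, in each case, that the hypothesized universal element $x$ cannot actually exist or else can be ``pulled back'' to a universal element of $\F$. The step I expect to be the main obstacle is Case~3, where the swap automorphism is needed and where one must carefully verify both that the resulting family is non-trivially intersecting and that the potential genuinely drops --- in the single-part literature this is exactly the point that required the most care, and the multi-part bookkeeping (the element could be forced by interactions across several parts at once) makes it fiddlier here. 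If a direct potential argument proves awkward, a fallback is to induct on the number of shifts needed, or to show directly that from a trivially intersecting $\F'$ one can exhibit a non-trivially intersecting family of size $|\F'|$ that is ``more shifted'' by reinstating a single non-trivial set, which is enough to drive the extremal argument.
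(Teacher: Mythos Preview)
Your case analysis contains a concrete error, and more seriously, it misses the structural dichotomy that makes the multi-part case genuinely harder than the single-part one.

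First the error. In your Case~3 (the ``delicate'' one, where the universal element of $\F'=S_t^{i,j}(\F)$ is $j$ in part $t$): this case is in fact vacuous. If $j\in G_t$ for every $G\in\F'$, take any $F\in\F$ with $j\in F_t$, $i\notin F_t$; then $S_t^{i,j}(F)\in\F'$ but $j\notin(S_t^{i,j}(F))_t$, a contradiction. So no set of $\F$ has $j\in F_t$, $i\notin F_t$, meaning the shift was trivial. Conversely, your Case~2 (universal element $i$) is the only surviving case, and your proposed conclusion there---that the shift must be trivial---does not follow: you correctly deduce that every $F\in\F$ has $i\in F_t$ or $j\in F_t$, but this is perfectly compatible with some $F$ having $j\in F_t$, $i\notin F_t$ and $S_t^{i,j}(F)\notin\F$. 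Your swap automorphism idea, applied here, sends $\F'$ (with universal element $i$) to a family with universal element $j$, so it is still trivially intersecting; and applied to $\F$ itself, it need not decrease $\Phi$.

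The deeper issue is that the argument must split on whether $k_t>1$ or $k_t=1$, and the two cases require different ideas. When $k_t>1$ one can follow Kupavskii--Zakharov: since every set of $\F$ contains $i$ or $j$ in part $t$, maximality forces $\F$ to contain the (non-empty) family $\mathcal{A}$ of all sets containing \emph{both} $i$ and $j$ in part $t$; because $\mathcal{A}$ is shifted and its only universal elements are $i,j$ in part $t$, all shifts avoiding that pair are ``safe'' and can be applied freely, after which one shows the two canonical witnesses to non-triviality are present and arbitrary shifts become safe. When $k_t=1$ this collapses: $\mathcal{A}$ is empty, and no local swap or single-shift trick repairs it. The paper handles this case by first reaching a family that is shifted in all coordinates except some set of $k_s=1$ coordinates, then analysing the possible projections onto those coordinates as a subset $\mathcal{H}$ of a hypercube $\{1,2\}^q$; maximality and the ``every shift becomes trivial'' constraint force $\mathcal{H}$ to be a parity class, and one then decreases the potential not by a shift but by replacing one vertex of $\mathcal{H}$ with its antipode. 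Your proposal does not engage with this $k_t=1$ obstruction, and that is the essential gap.
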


\cref{thm:shifted-ok} will follow immediately from a sequence of three lemmas. Say a non-trivially intersecting family $\F\subseteq\kunif$ is \emph{$Q$-shifted} if it is $s$-shifted for each $s\in Q$, and if for each $s\notin Q$ there are $i_s<j_s$ such that $S^{i_s,j_s}_s(\F)$ is trivially intersecting. In particular, note that if $Q=[p]$ and $\F$ is $Q$-shifted then $\F$ is shifted. We now prove the following very basic lemma concerning $Q$-shiftedness.

\begin{lem}
\label{lem:shifted-1}
Under the assumption of \cref{thm:shifted-ok}, there is a maximum-size non-trivially intersecting family $\F'\subseteq\kunif$ which is $Q$-shifted for some $Q\subseteq [p]$.
\end{lem}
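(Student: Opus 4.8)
The plan is to prove \cref{lem:shifted-1} by a straightforward extremal/iteration argument. Start with any maximum-size non-trivially intersecting family $\F_0\subseteq\kunif$, which exists by hypothesis. Among all maximum-size non-trivially intersecting families, we will repeatedly apply shifts, and we need an invariant that forces the process to terminate in a $Q$-shifted family. The key observation is that applying a $t$-shift $S_t^{i,j}$ does not change the size of the family (this is the multi-part analogue of the single-part fact, noted just before the lemma), and if the resulting family $S_t^{i,j}(\F)$ happens to still be non-trivially intersecting, then it is again a maximum-size non-trivially intersecting family to which we may recurse.

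First I would set up the iteration as follows. Maintain a set $Q\subseteq[p]$, initially empty, together with a maximum-size non-trivially intersecting family $\F$. At each stage, consider each index $s\notin Q$ in turn and ask whether there exist $i_s<j_s$ with $S_s^{i_s,j_s}(\F)$ trivially intersecting. If for \emph{every} $s\notin Q$ such a pair exists, then $\F$ is $Q$-shifted by definition (it is $s$-shifted for all $s\in Q$, and for each $s\notin Q$ there is a shift making it trivially intersecting), and we are done. Otherwise, there is some $s\notin Q$ such that for \emph{all} $i<j$, the shift $S_s^{i,j}(\F)$ is \emph{not} trivially intersecting — hence (being a shift of a non-trivially intersecting family, it is still intersecting, and it is not trivially intersecting) it is a maximum-size non-trivially intersecting family. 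We would then want to replace $\F$ by successive $s$-shifts until it becomes $s$-shifted, add $s$ to $Q$, and continue.

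The main obstacle — and the point that needs care — is ensuring that this process actually terminates, and in particular that once we have added $s$ to $Q$ and made $\F$ $s$-shifted, later shifts (for indices added to $Q$ afterwards) do not destroy $s$-shiftedness. For termination of the inner loop (making $\F$ $s$-shifted), one uses the standard potential-function argument from the single-part theory: each genuine application of a shift $S_s^{i,j}$ with $i<j$ strictly decreases a suitable monovariant, e.g.\ $\sum_{F\in\F}\sum_{x\in F_s} x$ (the sum of the elements in part $s$ over all sets), so only finitely many shifts in part $s$ can be applied before $\F$ is stable under all of them. For the cross-index issue, the relevant fact is that shifting in part $s'$ never increases $\sum_{F\in\F}\sum_{x\in F_s}x$ for $s\ne s'$ — indeed a $t$-shift only ever alters the coordinates in part $t$ — so one can simply run the whole procedure using the global monovariant $\Phi(\F)=\sum_{F\in\F}\sum_s\sum_{x\in F_s}x$; each shift weakly decreases $\Phi$, and each shift we actually perform (with $i<j$) strictly decreases it. Hence the entire process halts, and it halts only when, for the current $Q$, every $s\notin Q$ admits a shift to a trivially intersecting family while $\F$ is $s$-shifted for all $s\in Q$ — that is, $\F$ is $Q$-shifted.

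To write this cleanly I would phrase it as: take a maximum-size non-trivially intersecting family $\F'$ minimizing $\Phi(\F')=\sum_{F\in\F'}\sum_{s\in[p]}\sum_{x\in F_s}x$ over all maximum-size non-trivially intersecting families; let $Q=\{s\in[p]: \F'\text{ is }s\text{-shifted}\}$, and check that $\F'$ is $Q$-shifted. Indeed, for $s\notin Q$ there is some $i<j$ with $S_s^{i,j}(\F')\ne\F'$; if $S_s^{i,j}(\F')$ were non-trivially intersecting it would be a maximum-size non-trivially intersecting family with $\Phi(S_s^{i,j}(\F'))<\Phi(\F')$, contradicting minimality; so $S_s^{i,j}(\F')$ is trivially intersecting, giving the required $i_s<j_s$. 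This makes the argument a one-shot minimization rather than an explicit iteration, which is cleaner and sidesteps the termination bookkeeping, though it relies on exactly the same two facts: a shift never increases $\Phi$, and a nontrivial shift strictly decreases it.
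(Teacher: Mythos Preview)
Your proposal is correct and takes essentially the same approach as the paper. The paper's proof iterates shifts as long as some shift keeps the family non-trivially intersecting (terminating via the same monovariant you call $\Phi$) and then defines $Q$ as the set of parts in which the resulting family is shifted; your ``clean'' one-shot minimization of $\Phi$ over all maximum-size non-trivially intersecting families is the same argument repackaged, and your verification that each $s\notin Q$ admits a trivializing shift is exactly what the paper concludes.
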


\begin{proof}
Let $\F\subseteq \kunif$ be a maximum-size non-trivially intersecting family. Starting with $\F_1=\F$, for $m\ge 1$ as long as there is a shift $S_{s}^{i,j}$ for which the family $S_{s}^{i,j}(\F_{m})$ is non-trivially intersecting and different from $\F_{m}$, set $\F_{m+1}=S_{s}^{i,j}(\F_{m})$ (for any such choice of $s,i,j$). Similarly to the single-part setting, this process must eventually terminate with a non-trivially intersecting family $\F'$ of size $|\F'|=|\F|$ such that for any $s\in [p]$ either $\F'$ is $s$-shifted or there are $1\le i_s < j_s\le n_s$ such that $S_{s}^{i_s,j_s}(\F')$ is trivially intersecting. With $Q=\{s\in [p]:\F' \text{ is } s\text{-shifted}\}$, it follows that $\F'$ is $Q$-shifted.
\end{proof}

\begin{lem}
\label{lem:Q-shifted1}
Suppose there is a maximum-size non-trivially intersecting family $\F\subseteq \kunif$ which is $Q$-shifted for some $Q\subsetneq [p]$ such that $k_t>1$ for some $t\notin Q$. Then there is a maximum-size non-trivially intersecting family $\F'$ which is shifted.
\end{lem}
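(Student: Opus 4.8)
The plan is to first extract strong structural information from the hypothesis that $S_t^{i_t,j_t}(\F)$ is trivially intersecting, and then use it, together with $Q$-shiftedness, the condition $k_t\ge 2$, and maximality, to identify $\F$ — after a harmless relabeling of the ground set of part $t$ — with one of the explicit shifted families $\F^{\HM}_{t,S}$.

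\textbf{Step 1 (locating the forced element).} Let $\F^*=S_t^{i_t,j_t}(\F)$, which is trivially intersecting, and fix a common element $x$ of $\F^*$. Since $\F$ is \emph{not} trivially intersecting there is $G\in\F$ with $x\notin G$. Every set of $\F$ that is left unchanged by the shift lies in $\F^*$ and hence contains $x$, so $G$ is one of the changed sets: $i_t\notin G_t$, $j_t\in G_t$. Then $S_t^{i_t,j_t}(G)\in\F^*$, and as $G\notin\F^*$ we get $x\in S_t^{i_t,j_t}(G)\setminus G$; but that set difference is exactly the single element $i_t$ of part $t$, so $x$ is the element $i_t^{(t)}$. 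It follows that every $F\in\F$ with $i_t\in F_t$ or $j_t\notin F_t$ in fact has $i_t\in F_t$, i.e.\ \emph{every} $F\in\F$ satisfies $F_t\cap\{i_t,j_t\}\ne\emptyset$. Writing $\C=\{F\in\F: j_t\in F_t,\ i_t\notin F_t\}$, we also get $\C\ne\emptyset$ (otherwise $\F$ would be trivially intersecting via $i_t^{(t)}$), and $S_t^{i_t,j_t}(F)\notin\F$ for every $F\in\C$.

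\textbf{Step 2 (completing the shifting in part $t$).} I would first record the general principle, proved exactly as in Step 1: if $\G$ is a non-trivially intersecting family that is $s$-shifted for all $s\in Q$ and $S_t^{a,b}(\G)$ is trivially intersecting, then its common element must be the element $a$ of part $t$, and every member of $\G$ has its $t$-part meeting $\{a,b\}$. Now run the natural process: while the current family admits a $t$-shift that changes it and keeps it non-trivially intersecting, apply one. Since each such shift strictly decreases $\sum_{F}\sum_{x\in F_t}x$, the process terminates at a family $\F^{\circ}$ with $|\F^{\circ}|=|\F|$, still non-trivially intersecting and still $s$-shifted for all $s\in Q$ (because $t$-shifts commute with $s$-shifts for $s\ne t$, so they preserve $s$-shiftedness). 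If $\F^{\circ}$ is $t$-shifted we have gained: $\F^{\circ}$ is $(Q\cup\{t\})$-shifted and we may repeat the whole argument on the remaining non-shifted parts. Otherwise $\F^{\circ}$ is ``stuck'': every $t$-shift that changes it trivializes it, so $\F^{\circ}$ inherits a family of constraints ``every $t$-part meets $\{a,b\}$''.

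\textbf{Step 3 (identifying a stuck family).} This is where $k_t\ge 2$ is essential. For a stuck family the accumulated meeting-constraints, together with the surviving $s$-shiftedness for $s\in Q$ and maximality, should force the following picture: the sets of $\F^{\circ}$ containing $i_t^{(t)}$ form exactly the collection of all such sets meeting every member of $\C$, and intersecting against this large ``star-like'' part forces the sets of $\C$ to be governed by a single kernel $\bigsqcup_s B_s$. Because $k_t\ge 2$, this kernel can be relocated, by a permutation of the ground set of part $t$ (a symmetry of the whole problem, preserving size, non-trivial intersection, and membership in $\kunif$), onto the initial segments $\range{k_s}$ for $s\in S$ and onto $\{2,\dots,k_t+1\}$ in part $t$; after this relabeling, maximality forces $\F^{\circ}$ to contain every set compatible with the constraints, i.e.\ to equal $\F^{\HM}_{t,S}$ for the appropriate $S\subseteq[p]\setminus\{t\}$. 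As $\F^{\HM}_{t,S}$ is shifted and has size $|\F^{\circ}|=|\F|$, it is the desired maximum-size non-trivially intersecting shifted family.

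\textbf{Main obstacle.} The crux is Step 3: a ``stuck'' maximum family must be pinned down up to relabeling of part $t$. This is delicate precisely because the lemma exists only because generic shifts can turn a non-trivially intersecting family into a trivially intersecting one, so one cannot shift freely; the argument must simultaneously use the residual $Q$-shiftedness, the meeting-$\{a,b\}$ constraints from Step 2, the kernel structure of $\C$, and $k_t\ge 2$ to exclude every other configuration and to perform the final relabeling inside part $t$ without disturbing the parts already in $Q$. A secondary point needing care is that the outer iteration in Step 2 over the not-yet-shifted parts terminates and never re-spoils a part that has already been made shifted.
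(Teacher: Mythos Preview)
Your Step 1 is essentially correct and matches the paper's reasoning: from the trivializing shift $S_t^{i_t,j_t}$ one deduces that the common element must be $i_t$ in part $t$, and hence every $F\in\F$ has $F_t\cap\{i_t,j_t\}\ne\emptyset$.

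The genuine gap is Step 3, which you yourself flag as the ``main obstacle'' without resolving it. The assertion that a stuck maximum family must, up to relabeling in part $t$, equal some $\F^{\HM}_{t,S}$ is neither proved nor obviously true. In particular, nothing in the hypotheses forces the sets of $\C$ to share a single kernel $\bigsqcup_s B_s$; and in fact the paper's own \cref{subsec:non-trivial} exhibits maximum-size shifted non-trivially intersecting families of ``triangle'' type that are \emph{not} of the form $\F^{\HM}_{t,S}$ for certain parameter values. So the strategy of identifying $\F^\circ$ with an explicit $\F^{\HM}_{t,S}$ cannot succeed in general, and no weaker identification is offered. A secondary issue: in Step 2 your outer iteration ``repeat on the remaining non-shifted parts'' implicitly assumes $k_{t'}>1$ for the next part $t'$, which need not hold.

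The paper sidesteps Step 3 entirely with a much lighter idea. After relabeling so that $(i_t,j_t)=(1,2)$, maximality together with $k_t\ge 2$ gives $\mathcal A\subseteq\F$, where $\mathcal A$ is the family of \emph{all} sets in $\kunif$ containing both $1$ and $2$ in part $t$. Since $\mathcal A$ is shifted and its only common elements are $1,2$ in part $t$, any shift that does not touch $\{1,2\}$ in part $t$ --- i.e.\ any $S_t^{i,j}$ with $3\le i<j$, or any $s$-shift with $s\ne t$ --- keeps $\mathcal A$ inside the family and hence preserves non-trivial intersection. Applying these ``safe-shifts'' exhaustively yields $\F''\supseteq\mathcal A$ which is $s$-shifted for every $s\ne t$; then using non-triviality one locates the two specific sets $F',G'$ with $t$-parts $\{1,3,\dots,k_t+1\}$ and $\{2,3,\dots,k_t+1\}$ inside $\F''$, and hence the small shifted non-trivially intersecting family $\G=\bigcup_s\{[k_1]\}\times\dots\times\binom{[k_s+1]}{k_s}\times\dots\times\{[k_p]\}\subseteq\F''$. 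Once $\G$ is present, \emph{every} shift preserves non-trivial intersection, so one can shift freely to a fully shifted $\F'$. No classification of $\F$ is needed.
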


\begin{proof}
Here we adapt the approach of Kupavskii and Zakharov~\cite{KZ16}. Since $\F$ is $Q$-shifted, for each $s\notin Q$ there are $i_s<j_s$ such that $S^{i_s,j_s}_s(\F)$ is trivially intersecting. By permuting the elements of the ground set in each part $s\notin Q$ in such a way that $i_s\mapsto 1$ and $j_s\mapsto 2$, we may assume that $(i_s,j_s)=(1,2)$ for each $s\notin Q$.

Let $\mathcal A$ denote the (non-empty) family of all sets in $\kunif$ which contain both $1$ and $2$ in part $t$. Since $\F$ is non-trivially intersecting but $S_{t}^{1,2}(\F)$ is trivially intersecting it follows that every set in $\F$ contains $1$ or $2$ in part $t$. Therefore, by maximality of $\F$, we must have $\mathcal A\subseteq \F$. Observe that $\mathcal A$ is a shifted family and that the only elements that belong to every set in $\mathcal A$ are $1$ and $2$ in part $t$. Since $\F$ is non-trivially intersecting and contains $\mathcal A$, repeatedly applying to $\F$ shifts of the form $S_{t}^{i,j}$ with $3\le i<j\le n_{t}$, and $s$-shifts with $s\ne t$, cannot create a trivial intersection. Call such shifts \emph{safe-shifts}. Starting from $\F$ repeatedly perform safe-shifts to obtain a non-trivially intersecting family $\F''$ that contains $\mathcal A$ and is stable under such shifts.

Now, by non-triviality of $\F''$, there is a set $F\in \F''$ containing $1$ but not $2$ in part $t$. Note that $F':=\range{k_{1}}\sqcup\dots\sqcup\left\{ 1,3,4,\dots,k_{t}+1\right\} \sqcup\dots\sqcup\range{k_{p}}$
can be reached from $F$ by a sequence of safe-shifts, and therefore $F'\in \F''$. Similarly, there is a set $G\in \F''$ containing $2$ but not $1$ in part $t$ and one can reach the set $G':=\range{k_{1}}\sqcup\dots\sqcup\left\{ 2,3,4,\dots,k_{t}+1\right\} \sqcup\dots\sqcup\range{k_{p}}$ from $G$ by a sequence of safe-shifts, implying that $G'\in \F''$.

Set $\G :=\bigcup_{s}\G_{s}$ where $\G_{s}:=\left\{ \range{k_{1}}\right\} \times\dots\times\binom{\range{k_{s}+1}}{k_{s}}\times\dots\times\left\{ \range{k_{p}}\right\}$. Note that the only sets in $\G$ which are not in $\mathcal A$ are precisely $F'$ and $G'$. Therefore, we conclude that $\G\subseteq \F''$. Since $\G$ is a shifted non-trivially intersecting family, further shifts applied to $\F''$ cannot create a trivial intersection. Therefore, we can repeatedly apply shifts to $\F''$ in order to obtain a shifted maximum-size non-trivially intersecting family $\F'$.
\end{proof}

\begin{lem}
Suppose there is a maximum-size non-trivially intersecting family $\F\subseteq \kunif$ which is $Q$-shifted for some $Q\subsetneq [p]$ such that $k_t=1$ for every $t\notin Q$. Then there is a maximum-size non-trivially intersecting family $\F'$ which is shifted.
\end{lem}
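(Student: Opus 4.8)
The plan is to prove the lemma by induction on $p$. Let $T=[p]\setminus Q$, so that $T\ne\emptyset$ and $k_t=1$ for every $t\in T$. Since $\F$ is $Q$-shifted, for each $s\in T$ there are $i_s<j_s$ with $S_s^{i_s,j_s}(\F)$ trivially intersecting, and after relabelling the ground set of each part in $T$ we may assume $(i_s,j_s)=(1,2)$ for all $s\in T$. Fix some $t\in T$.

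The first step is to read off the structure of $\F$ from the fact that $S_t^{1,2}(\F)$ is trivially intersecting. A $t$-shift does not change whether an element outside part $t$ lies in every set of a family, so were the common element of $S_t^{1,2}(\F)$ outside part $t$ it would also be common to $\F$, contradicting non-triviality; hence it lies in part $t$ and, as $k_t=1$, is a single vertex. A short case analysis according to whether $\F$ contains a set with $\{1\}$ in part $t$, a set with $\{2\}$ in part $t$, or neither (this last case is impossible, as then $S_t^{1,2}(\F)=\F$ would be trivially intersecting) shows this vertex must be vertex $1$ of part $t$, and hence that every set of $\F$ has $\{1\}$ or $\{2\}$ in part $t$. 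Let $\F^{(b)}$ be the subfamily of sets with $\{b\}$ in part $t$, and let $\A$ (resp.\ $\B$) be the family on parts $[p]\setminus\{t\}$ obtained from $\F^{(1)}$ (resp.\ $\F^{(2)}$) by deleting part $t$; then $\F$ is the disjoint union of $\F^{(1)}$ and $\F^{(2)}$, so $\lvert\F\rvert=\lvert\A\rvert+\lvert\B\rvert$.

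The second step uses maximality of $\F$. Since $\F$ is intersecting so is $\A\cup\B$, and since $\F$ is non-trivially intersecting no element lies in every set of $\A\cup\B$, so $\A\cup\B$ is non-trivially intersecting on the parts $[p]\setminus\{t\}$. For a family $\G$ on $[p]\setminus\{t\}$ let $\G^{+}\subseteq\kunif$ denote the family of all sets which have $\{1\}$ or $\{2\}$ in part $t$ and whose restriction to the other parts lies in $\G$; then $\lvert\G^{+}\rvert=2\lvert\G\rvert$, and $\G^{+}$ is trivially intersecting if and only if $\G$ is (in particular $\G^{+}$ is non-trivially intersecting whenever $\G$ is). Now $(\A\cup\B)^{+}$ is non-trivially intersecting of size $2\lvert\A\cup\B\rvert\ge\lvert\A\rvert+\lvert\B\rvert=\lvert\F\rvert$, so maximality of $\F$ forces equality throughout; this yields $\A=\B=\A\cup\B$ and $\F=\A^{+}$, and moreover $\A$ has maximum size among non-trivially intersecting families on $[p]\setminus\{t\}$ (a larger one would produce, via $(\cdot)^{+}$, a family larger than $\F$).

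The third step is the induction. One checks directly that $S_s^{i,j}(\G^{+})=\bigl(S_s^{i,j}(\G)\bigr)^{+}$ for every $s\ne t$ and $i<j$, and that $S_t^{i,j}(\G^{+})=\G^{+}$ for every $i<j$ (the shift $S_t^{1,2}$ always creates a conflict, while $S_t^{i,j}$ with $\{i,j\}\ne\{1,2\}$ fixes every set since no set has a vertex $\ge3$ in part $t$). Applying these to $\F=\A^{+}$, together with the fact that $\F$ is $Q$-shifted and that $S_s^{1,2}(\F)$ is trivially intersecting for $s\in T\setminus\{t\}$, shows that $\A$ is $s$-shifted for every $s\in Q$ and that $S_s^{1,2}(\A)$ is trivially intersecting for every $s\in T\setminus\{t\}$. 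If $T=\{t\}$ then $Q$ is all of $\A$'s part-set, so $\A$ is shifted; set $\A'=\A$. If $\lvert T\rvert\ge2$ then $\A$ is a maximum-size non-trivially intersecting family on $[p]\setminus\{t\}$ which is $Q$-shifted with $Q$ a proper subset of its parts and $k=1$ on every part off $Q$, so by the induction hypothesis there is a shifted maximum-size non-trivially intersecting family $\A'$ on $[p]\setminus\{t\}$. In either case $\lvert\A'\rvert=\lvert\A\rvert$, so $(\A')^{+}$ is non-trivially intersecting of size $2\lvert\A'\rvert=\lvert\F\rvert$ and is shifted by the facts just noted, which is exactly what we want. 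I expect the main difficulty to be the first step, namely rigorously identifying the common vertex of $S_t^{1,2}(\F)$ and disposing of the degenerate sub-cases; the remaining steps amount to bookkeeping about how shifts interact with the product structure in part $t$.
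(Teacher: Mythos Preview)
Your second step contains a fatal error: the claim ``Since $\F$ is intersecting so is $\A\cup\B$'' does not follow. From the intersecting property of $\F$ you only obtain that $\A$ and $\B$ are \emph{cross}-intersecting (every $A\in\A$ meets every $B\in\B$), because two sets lying in the same $\F^{(b)}$ already meet in part $t$ and give no information about their restrictions. In fact the hypothesis that $S_t^{1,2}(\F)$ is trivially intersecting with common vertex $1$ in part $t$ (which you correctly establish) forces $\A\cap\B=\emptyset$: if some $C$ lay in $\A\cap\B$ then both $\{1\}\sqcup C$ and $\{2\}\sqcup C$ would belong to $\F$, and then $\{2\}\sqcup C$ would survive in $S_t^{1,2}(\F)$, contradicting that every set there has $\{1\}$ in part $t$. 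Since $\A$ and $\B$ are non-empty and disjoint, your conclusion $\A=\B$ is impossible. Worse, your own maximality argument shows that $\A\cup\B$ can \emph{never} be intersecting under the lemma's hypotheses: if it were, then $(\A\cup\B)^{+}$ would be non-trivially intersecting of size $2\lvert\A\cup\B\rvert=2(\lvert\A\rvert+\lvert\B\rvert)=2\lvert\F\rvert>\lvert\F\rvert$, contradicting maximality of $\F$. So the inductive reduction to a single intersecting family on $[p]\setminus\{t\}$ collapses entirely; what you actually have on the smaller ground set is a disjoint cross-intersecting pair $(\A,\B)$, which is a different and harder object.

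The paper's proof avoids this by not peeling off one part at a time. It treats all parts outside $Q$ simultaneously: writing $[p]\setminus Q=\{1,\dots,q\}$, it studies the set $\mathcal{H}\subseteq\{1,2\}^{q}$ of possible projections onto these parts, shows (using maximality and the $Q$-shiftedness conditions) that $\mathcal{H}$ contains exactly one point of every complementary pair and exactly one of every Hamming-adjacent pair, hence is a bipartition class of the hypercube, which forces $q\ge3$. Then, for an extremal projection $P\in\mathcal{H}$, it swaps every set of $\F$ with projection $P$ to the corresponding set with projection $\overline{P}$; this yields another maximum non-trivially intersecting family with strictly smaller ``order'' $\sum_{F\in\F}\sum_s\sum_{x\in F_s}x$. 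Taking $\F$ of minimum order to begin with, this is a contradiction unless the resulting family falls under the previous lemma (some $k_t>1$ outside the new $Q$), and in either case one obtains a shifted maximum family.
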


\begin{proof}
We remark that this is the part of the proof with the bulk of the new ideas specific to the multi-part case.
Define the \emph{order} of a family $\F\subseteq\kunif$ as follows:
\[\text{ord}(\F):=\sum_{\bigsqcup_{s}\! F_{s}\in \F}\sum_{s\in [p]}\sum_{x\in F_s}x.\]
Consider $\F$ as in the lemma statement, chosen to have minimum possible order, and suppose without loss of generality that $\F$ is $Q$-shifted with $Q=\{q+1,q+2,\dots, p\}$ for some $q\ge 1$.

Since $\F$ is $Q$-shifted, for each $t\in \range q$ there are $i_t<j_t$ such that $S^{i_t,j_t}_t(\F)$ is trivially intersecting. This implies that every set in $\F$ contains $i_t$ or $j_t$ in each part $t\in \range q$. Since $k_t=1$ for each $t\in\range q$, this actually implies that each set is \emph{equal} to $\{i_t\}$ or $\{j_t\}$ in each such part. Note that actually $(i_t,j_t)=(1,2)$ for each $t\in\range q$, because otherwise we could permute the elements of the ground set in each part $t\in\range q$ in such a way that $i_t\mapsto 1$ and $j_t\mapsto 2$, and obtain a maximum-size $Q$-shifted non-trivially intersecting family of smaller order. Therefore, for any $\bigsqcup_{s}F_{s}\in \F$ and any $t\in [q]$, we have $F_{t}= \{1\}$ or $F_{t}=\{2\}$. Furthermore, since $\mathcal F$ is $t$-shifted for $t\in \{q+1,q+2,\dots, p\}$, we know that if $\bigsqcup_{s}F_{s}\in \mathcal F$ then also $F_{1}\sqcup\dots\sqcup F_{q}\sqcup\range{k_{q+1}} \sqcup\dots\sqcup\range{k_{p}}\in \mathcal F$. Consider the set
\[\mathcal H = \{(x_1,\dots, x_{q})\in \{1,2\}^{q}: \{x_1\}\sqcup\dots\sqcup \{x_{q}\}\sqcup\range{k_{q+1}} \sqcup\dots\sqcup\range{k_{p}}\in \mathcal F\},\]
which we can think of as the set of possible projections onto the first $q$ coordinates of sets in $\mathcal F$. We view $\mathcal H$ as a subset of the hypercube $\{1,2\}^q$.
For any $P= (x_1,\dots, x_q)\in \{1,2\}^{q}$ define its \emph{complement} $\overline{P} := (3-x_1,\dots, 3-x_q)\in \{1,2\}^{q}$. Note that if $\overline{P}\notin \mathcal H$ then $P$ shares at least one coordinate with every point in $\mathcal H$. This implies that $\{x_1\}\sqcup\dots\sqcup \{x_{q}\}\sqcup\range{k_{q+1}} \sqcup\dots\sqcup\range{k_{p}}$ intersects any set in $\mathcal F$. By maximality of $\mathcal F$, we conclude that $P\in \mathcal H$. Therefore, for any pair $\{P, \overline{P}\}$, with $P\in \{1,2\}^{q}$, at least one of $P$ or $\overline{P}$ is in $\mathcal H$. In particular, since these pairs partition $\{1,2\}^{q}$ into $2^{q-1}$ parts, we conclude that $|\mathcal H|\ge 2^{q-1}$.

Next, suppose that $\mathcal H$ contains two Hamming-adjacent points. By the definition of $\mathcal H$, this would imply that for some $s\in [q]$ and $x_1,\dots, x_{s-1},x_{s+1},\dots, x_q\in\{1,2\}$, for both choices of $y\in \{1,2\}$, we have
\[\{x_1\}\sqcup\dots\sqcup \{x_{s-1}\}\sqcup\{y\}\sqcup \{x_{s+1}\}\sqcup\dots\sqcup \{x_{q}\}\sqcup\range{k_{q+1}} \sqcup\dots\sqcup\range{k_{p}}\in \F.\]
This would mean that $S_{s}^{1,2}(\mathcal F)$ is non-trivially intersecting, contradicting the fact that $s\notin Q$. Hence, any two points in $\mathcal H$ are at Hamming distance at least $2$. For any $s\in [q]$ we can partition $\{1,2\}^{q}$ into $2^{q-1}$ pairs of points which differ only in coordinate $s$, so we conclude that $|\mathcal H|\le 2^{q-1}$.

By the last two paragraphs, we conclude that $|\mathcal H| = 2^{q-1}$ and
\begin{enumerate}
\item[(1)] $\mathcal H$ contains exactly one point from any pair of adjacent points in $\{1,2\}^{q}$,
\item[(2)] $\mathcal H$ contains exactly one point from any pair of complementary points in $\{1,2\}^{q}$.
\end{enumerate}
Note that (1) implies that $\mathcal H$ and $\{1,2\}^{q}\setminus \mathcal H$ form a partition of the hypercube $\{1,2\}^{q}$ into independent sets. Since the hypercube $\{1,2\}^{q}$ is a connected bipartite graph, this partition is unique up to switching the parts. Therefore, we conclude that either $\mathcal H = \mathcal H_{\text{even}} := \{(x_1, \dots, x_q)\in \{1,2\}^{q}: x_1+\dots + x_q\text{ is even}\}$ or $\mathcal H = \mathcal H_{\text{odd}} := \{(x_1, \dots, x_q)\in \{1,2\}^{q}: x_1+\dots + x_q\text{ is odd}\}$. Note that if $q=2$ (or more generally, if $q$ is even) then neither $\mathcal H_{\text{even}}$ nor $\mathcal H_{\text{odd}}$ satisfy (2). Moreover, $q\neq 1$ as otherwise $\mathcal F$ would be trivially intersecting. It follows that $q\ge 3$.

Choose $P=(x_1,\dots, x_q)\in \mathcal H$ such that $x_1+\dots +x_q$ is as large as possible (that is, $P=(2,2,\dots, 2)$ if $\mathcal H =\mathcal H_{\text{even}}$ and say $P=(2,2,\dots, 2,1)$ if $\mathcal H= \mathcal H_{\text{odd}}$). Let $\mathcal F''$ denote the family obtained from $\mathcal F$ by replacing every set of $\mathcal F$ of the form $\{x_1\}\sqcup\dots\sqcup \{x_{q}\}\sqcup F_{q+1}\sqcup\dots\sqcup F_{p}$ with $\{3-x_1\}\sqcup\dots\sqcup \{3-x_{q}\}\sqcup F_{q+1}\sqcup\dots\sqcup F_{p}$. Note that the latter set is not in $\F$ since $\overline{P}\notin \mathcal H$ by (2). Therefore, $|\mathcal F''| = |\mathcal F|$. Moreover, $\text{ord}(\mathcal F'')< \text{ord}(\mathcal F)$ by the choice of $P$ and the fact that $q\ge 3$. We claim now that $\mathcal F''$ is a non-trivially intersecting family, and is still $s$-shifted for $s\in Q$. Indeed, since we removed from $\mathcal F$ every set whose first $q$ parts agree with $P$, every set in $\mathcal F''$ has at least one of its first $q$ parts agreeing with $\overline{P}$. Hence, every set in $\mathcal F''\setminus \mathcal F$ intersects any set in $\mathcal F''$ in one of the first $q$ parts, and so $\mathcal F''$ is an intersecting family. It remains to show that $\F''$ is non-trivially intersecting. If this were not the case, then for some part $s\in [q]$ all the sets in $\F''$ would have to agree with $\overline{P}$ in part $s$. However, since $q\ge 3$ there is some point $P'\in \mathcal H$ at Hamming distance $2$ from $P$ which agrees with $P$ in coordinate $s$. Since $P'\in \mathcal H$, there is some set in $\mathcal F$ which coincides with $P'$ in its first $q$ parts and therefore belongs to $\F''$ and does not agree with $\overline{P}$ in part $s$. Therefore, $\mathcal F''$ is non-trivially intersecting, as claimed. Note that $\F''$ is still $s$-shifted for $s\in Q$, and note that shifting $\F''$ can only decrease its order further. Therefore, by repeatedly applying shifts to $\F''$ which do not make the family trivially intersecting (as in \cref{lem:shifted-1}), we can obtain a maximum-size non-trivially intersecting family $\F'$ which is $Q'$-shifted for some $Q'\subseteq [p]$ and which has smaller order than $\F$. By the order-minimality of the choice of $\F$ we conclude that either $Q'=[p]$ or $Q'\subsetneq [p]$ and $k_t>1$ for some $t\notin Q'$. In the first case, it follows that $\F'$ is shifted, as desired. In the second case, the result follows from \cref{lem:Q-shifted1}.
\end{proof}


\section{Proof of \texorpdfstring{}{Theorem }\cref{thm:asymptotic}}
\label{sec:proof}

\global\long\def\Q{P}
\global\long\def\cQ{\mathcal{\Q}}

In this section we prove \cref{thm:asymptotic}. By \cref{thm:shifted-ok}, it suffices to prove the required bound for shifted non-trivially intersecting families $\mathcal \F\subseteq\kunif$.

The crucial property of shifted families is that one can observe that they are intersecting just by looking at the first few elements of each part. For $F=\bigsqcup_s F_s\in\bigsqcup_s 2^{\range {n_s}}$, let $\Q_{s}\left(F\right)=F_s\cap\range{2k_{s}}$
be the ``projection'' onto the first $2k_{s}$ elements of part $s$, let $\Q\left(F\right)=\bigsqcup_{s}\Q_{s}\left(F\right)$, and let 
$\cQ(\F) =\{\Q(F):F\in \F\}$.

\begin{lem}
\label{lem:alpha-shifted}
For any $p\ge 1$, any $k_1,\dots,k_p$ and any $n_1,\dots,n_p$, if $\F\subseteq\kunif$ is intersecting and shifted then $\Q(F)\cap \Q(G)\ne\emptyset$ for any $F,G\in \F$.
\end{lem}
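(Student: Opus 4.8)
The plan is to argue by contradiction. Suppose $F=\bigsqcup_s F_s$ and $G=\bigsqcup_s G_s$ both lie in $\F$ with $\Q(F)\cap\Q(G)=\emptyset$; concretely this means $F_s\cap G_s\cap\range{2k_s}=\emptyset$ for every part $s$. From $F$ and $G$ I will build sets $F'=\bigsqcup_s F'_s$ and $G'=\bigsqcup_s G'_s$, each still lying in $\F$, with $F'\cap G'=\emptyset$, contradicting the hypothesis that $\F$ is intersecting. (The case $F=G$ is permitted here: then the hypothesis forces $\Q(F)=\emptyset$, and the construction below still produces two \emph{distinct} disjoint members of $\F$.)

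The construction is carried out separately in each part. Fix $s$ and put $A_s=F_s\cap\range{2k_s}$ and $B_s=G_s\cap\range{2k_s}$, which by assumption are disjoint subsets of $\range{2k_s}$, of sizes $a_s\le k_s$ and $b_s\le k_s$ respectively. Then $F_s$ has $k_s-a_s$ elements above $2k_s$, $G_s$ has $k_s-b_s$ such elements, and $\range{2k_s}\setminus(A_s\cup B_s)$ has exactly $2k_s-a_s-b_s=(k_s-a_s)+(k_s-b_s)$ elements. Hence we may split $\range{2k_s}\setminus(A_s\cup B_s)$ into a set $D_s$ of size $k_s-a_s$ and a disjoint set $E_s$ of size $k_s-b_s$, and set $F'_s:=A_s\cup D_s$ and $G'_s:=B_s\cup E_s$. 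These are $k_s$-subsets of $\range{n_s}$ contained in $\range{2k_s}$, and checking the four cross-intersections of $A_s,D_s$ against $B_s,E_s$ gives $F'_s\cap G'_s=\emptyset$, so $F'\cap G'=\emptyset$. Moreover $F'_s$ is a ``down-shift'' of $F_s$ (for $m\ge 2k_s$ we have $|F'_s\cap\range m|=k_s\ge|F_s\cap\range m|$, and for $m<2k_s$ we have $F_s\cap\range m\subseteq A_s\subseteq F'_s$), which is what makes the next step go through; the same holds for $G'_s$ and $G_s$.

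It remains to check that $F'\in\F$ (the argument for $G'$ is identical). I will exhibit a chain $F=H^{(0)},H^{(1)},\dots,H^{(N)}=F'$ of sets in $\kunif$ in which each $H^{(m+1)}$ is obtained from $H^{(m)}$ by picking a part $t$ and indices $i<j$ with $j$ in the $t$-th coordinate of $H^{(m)}$ and $i$ not in it, then replacing $j$ by $i$ there. Since $\F$ is $t$-shifted for every $t$, stability of $\F$ under the shift $S_t^{i,j}$ gives exactly the implication ``$H^{(m)}\in\F\Rightarrow H^{(m+1)}\in\F$'', so $F=H^{(0)}\in\F$ yields $F'=H^{(N)}\in\F$ by induction. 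To build the chain, handle the parts one at a time; in part $s$ fix any bijection $\phi_s\colon F_s\setminus\range{2k_s}\to D_s$ (so $\phi_s(c)\le 2k_s<c$ for all $c$) and replace the elements $c\in F_s\setminus\range{2k_s}$ by $\phi_s(c)$, one at a time. When it is $c$'s turn, the $s$-th coordinate of the current set still contains $c$, while $\phi_s(c)$ is absent from it, since $\phi_s(c)\notin A_s$, $\phi_s(c)$ is not one of the still-unmoved elements of $F_s\setminus\range{2k_s}$ (those all exceed $2k_s$), and $\phi_s(c)$ differs from every previously inserted image; thus this move really is an admissible shift $S_s^{\phi_s(c),c}$. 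After all of $F_s\setminus\range{2k_s}$ has been processed the $s$-th coordinate equals $A_s\cup D_s=F'_s$, and doing this for $s=1,\dots,p$ carries $F$ to $F'$. The one delicate point is precisely this verification that every individual move is an admissible shift, so that $t$-shiftedness of $\F$ may genuinely be applied; the remainder is bookkeeping. Having produced $F',G'\in\F$ with $F'\cap G'=\emptyset$, we contradict the assumption that $\F$ is intersecting, which completes the proof.
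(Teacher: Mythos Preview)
Your proof is correct and follows essentially the same strategy as the paper's: argue by contradiction, and use shiftedness to push elements of $F$ (and, in your version, also $G$) down into $\range{2k_s}$ so as to manufacture a disjoint pair inside $\F$. The only difference is one of economy: the paper shifts just $F$, moving only the overlapping elements $B_s=F_s\cap G_s$ into $\range{2k_s}\setminus(\Q_s(F)\cup\Q_s(G))$ so that the modified $F'$ is already disjoint from the original $G$; you instead push \emph{all} high elements of both $F$ and $G$ down, partitioning $\range{2k_s}\setminus(A_s\cup B_s)$ between them. Both arguments rest on the same count $|\range{2k_s}\setminus(\Q_s(F)\cup\Q_s(G))|=(k_s-a_s)+(k_s-b_s)$ and the same use of shiftedness, so this is a cosmetic variation rather than a genuinely different route.
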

\begin{proof}
Suppose that there were $F=\bigsqcup_s F_s,\;G=\bigsqcup_s G_s\in\F$ such that $\Q\left(F\right)$ and
$\Q\left(G\right)$ are disjoint, and let $B_{s}=F_s\cap G_s$. Note that $2\left|B_{s}\right|\le 2 k_{s}-|\Q_{s}\left(F\right)|-|\Q_{s}\left(G\right)|$ since $B_{s}\subseteq F_s\setminus \Q_{s}(F)$ and $B_{s}\subseteq G_s\setminus \Q_{s}(G)$.
For each $s$, choose some $B_{s}'\subseteq\range{2k_{s}}\backslash(\Q_{s}\left(F\right)\cup \Q_{s}\left(G\right))$
with $\left|B_{s}'\right|=\left|B_{s}\right|$ and note that $F':=\bigsqcup_{s}\left(F_s\backslash B_{s}\right)\cup B_{s}'$ can be obtained from $F$ by a sequence of shifts. Therefore, by shiftedness $F'\in\F$. But $G$ is disjoint from $F'$, which
is a contradiction.
\end{proof}

Let $\F$ be a shifted non-trivially intersecting family of maximum size. \cref{lem:alpha-shifted} implies that $|\Q(F)|\ge 2$ for each $F\in \F$, because otherwise $\cQ(\F)$, and therefore $\F$, would be trivially intersecting. Let $\F^*=\{F\in \F:|\Q(F)|=2\}$. Now, since $\cQ\left(\F^{*}\right)$ is a 2-uniform intersecting family, it has very restricted structure. We distinguish between two cases, depending on whether $\cQ\left(\F^{*}\right)$ is trivially intersecting.

All asymptotic notation in the following sections is to be taken as $n_0\to\infty$, treating $p,k_1,\dots,k_p$ as constants. In particular, this means that there are only $O(1)$ possibilities for a projected family $\cQ(\F)$. Where relevant we assume that $n_0$ is sufficiently large.

\subsection{Case 1: $\cQ\left(\F^{*}\right)$ is trivially intersecting (or empty)}

For this case, without loss of generality assume $n_1\le \dots\le n_p$. It will be important to note that for any $Z\in \bigsqcup_s2^{\range{n_s}}$ with $z_s\le k_s$ elements in each part $s$, there are $\Theta\left(\prod_{s}n_s^{k_{s}-z_s}\right)$ sets in $\kunif$ which include $Z$. In particular, note that
\[
M^{\max}=\begin{cases}
\Theta\left(\left(\prod_{s}n_s^{k_{s}}\right)/\left(n_{1}n_{2}\right)\right) & \text{if $k_{1}=1$;}\\
\Theta\left(\left(\prod_{s}n_s^{k_{s}}\right)/n_{1}^{2}\right) & \text{otherwise},
\end{cases}\label{eq:Mmax}
\]
and note that if $|Z|>2$ then there are at most $o(M^{\max})$ sets in $\kunif$ which contain $Z$. This means that $\left|\F\right|=\left|\F^{*}\right|+o\left(M^{\max}\right)$. It follows that if $|\F^*|$ is empty, then $|\F|<M^{\max}$. So, in what follows assume $\F^*$ is not empty, in which case $\cQ\left(\F^{*}\right)$ is non-empty.

Assume that $\cQ\left(\F^{*}\right)=\{\{x,y_1\},\{x,y_2\},\dots\{x,y_q\}\}$ with $q\ge 1$, and let $t$ be the part of $x$. Noting that $q\le \sum_s 2k_s=O(1)$, we have
\[
|\F^*|\le \sum_{i=1}^q \left|\left\{F\in \kunif:\{x,y_i\}\subseteq F\right\}\right|=O\left(\left(\prod_{s}n_{s}^{k_{s}}\right)/(n_t n_1)\right).
\]
This means that in order to have $|\F^*|=\Omega(M^{\max})$ (which is necessary to have $|\F|>M^{\max}$), we must have $n_t=O(n_1)$ in the case $k_1>1$, and $n_t=O(n_2)$ in the case $k_1=1$.

Now, we prove that every set in $\F$ is of a very specific form. Let $Y=\{y_1,\dots,y_q\}$.

\begin{claim}
Every set $F\in\F$ has $x\in F$ and $F\cap Y\ne\emptyset$
or satisfies $F\supseteq Y$.
\end{claim}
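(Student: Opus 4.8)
The plan is to argue by a series of contradictions exploiting shiftedness and the restricted structure of $\cQ(\F^*)$. Recall the setup: $\cQ(\F^*) = \{\{x,y_1\},\dots,\{x,y_q\}\}$ with $x$ in part $t$ and $Y = \{y_1,\dots,y_q\}$. Every $F \in \F$ has $|\Q(F)| \ge 2$ by \cref{lem:alpha-shifted}, and by that same lemma $\Q(F)$ must meet $\Q(G)$ for every $G \in \F$; in particular $\Q(F)$ meets every set of $\cQ(\F^*)$. So the first observation is purely combinatorial: a set $Q$ meeting all of $\{x,y_1\},\dots,\{x,y_q\}$ either contains $x$, or else contains at least one $y_i$ from each pair --- but since the pairs share only $x$, a $Q$ avoiding $x$ must contain $y_i$ for \emph{every} $i$, i.e.\ $Q \supseteq Y$. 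Hence for every $F \in \F$, either $x \in \Q(F)$ (so $x \in F$) or $Y \subseteq \Q(F) \subseteq F$.

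It then remains to upgrade the first case: if $x \in F$ we must further show $F \cap Y \ne \emptyset$. Suppose not, so there is $F = \bigsqcup_s F_s \in \F$ with $x \in F$ but $F \cap Y = \emptyset$. The idea is to use shiftedness to push $F$ down to a set whose projection is $\{x, y\}$ for some $y \notin Y$, contradicting the description of $\cQ(\F^*)$ --- provided we can check such a $y$ exists, which is where the case analysis and the bound $q = O(1)$ versus $|\range{2k_s}|$ come in. Concretely: within part $t$, apply shifts $S_t^{i,j}$ to move all of $F_t \setminus \{x\}$ down toward the bottom of $\range{n_t}$; in every other part $s$, shift $F_s$ down to $\range{k_s}$. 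Shiftedness guarantees the resulting set $F'$ lies in $\F$, and $F'$ has exactly two elements of $\range{2k_s}$-projection in part $t$ and $k_s \le 2k_s$ in each other part --- so actually I need to be slightly more careful and instead argue that \emph{some} set reachable by shifts from $F$ has projection of size exactly $2$. The cleanest route: since $x \in \Q(F)$ and $F$ has $k_s$ elements per part, repeatedly shifting cannot increase $|\Q(\cdot)|$ and can drive it down to $2$ while keeping $x$ in the part-$t$ coordinate (because $x$ itself, being one of the smallest available elements as $\{x,y_i\} \in \cQ(\F^*)$ forces $x \le 2k_t$, is never shifted away); the other projected element $y'$ then satisfies $\{x,y'\} \in \cQ(\F^*)$, so $y' \in Y$. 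But $y'$ was obtained by shifting elements of $F$, and if $F \cap Y = \emptyset$ then... here one must check that shifting cannot \emph{create} a $Y$-element in part $t$ unless one was already weakly above it --- this monotonicity is exactly what shiftedness gives, since $Y$-elements in part $t$ are among the smallest $2k_t$, and shifting only replaces larger elements by smaller ones. So if no $Y$-element sits in $F_t$ originally, none can appear, a contradiction.

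I expect the main obstacle to be the bookkeeping in this last step: being precise about \emph{which} elements of $Y$ lie in which parts (a priori the $y_i$ could be spread across several parts, not just part $t$), and verifying that the shifting process genuinely reaches a set of projection-size exactly $2$ rather than getting ``stuck'' at a larger projection because some other part contributes. One likely needs to split on whether $y_i$ is in part $t$ or not, handle the part-$t$ $y_i$'s via the claim that $x$ is never shifted (so a second small element in part $t$ would have to be in $Y$), and handle the non-part-$t$ $y_i$'s by noting that shifting part $s \ne t$ down to $\range{k_s}$ makes $F_s$ disjoint from $Y \cap \range{n_s}$ unless $Y \cap \range{k_s} \ne \emptyset$ --- and the latter is governed by the structure of $\cQ(\F^*)$ as an intersecting $2$-uniform family. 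These are all routine in principle but fiddly, and getting the quantifiers right across the multi-part structure is the delicate part.
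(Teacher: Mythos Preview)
Your first observation is correct and matches the paper: since $\Q(F)$ must meet each $\{x,y_i\}\in\cQ(\F^*)$, either $x\in\Q(F)$ or $Y\subseteq\Q(F)$, so every $F\in\F$ satisfies $x\in F$ or $Y\subseteq F$.

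The second part of your argument, however, has a genuine gap. You claim that ``repeatedly shifting cannot increase $|\Q(\cdot)|$ and can drive it down to $2$''. This is backwards. A shift $S_s^{i,j}$ replaces a larger element $j$ by a smaller element $i$; if $j>2k_s$ and $i\le 2k_s$ then $|\Q_s(\cdot)|$ \emph{increases} by one, and in all other cases it stays the same. So shifting can never reduce $|\Q(F)|$, and if $|\Q(F)|\ge 3$ you cannot reach a set in $\F^*$ by shifts. Your plan to land in $\cQ(\F^*)$ and read off a forbidden pair $\{x,y'\}$ with $y'\notin Y$ therefore does not work.

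More fundamentally, the claim is simply false for shifted non-trivially intersecting families that are not of maximum size, so no argument using only shiftedness and the intersection structure can succeed. For instance, with $p=1$, $k_1=3$, $n_1$ large, take $\F$ to consist of all $3$-sets containing $\{1,2\}$ together with $\{1,3,4\}$ and $\{2,3,4\}$. This family is shifted and non-trivially intersecting; one checks $\cQ(\F^*)=\{\{1,2\}\}$, so $x=1$, $Y=\{2\}$, yet $\{1,3,4\}\in\F$ contains $x$, is disjoint from $Y$, and does not contain $Y$.

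The paper's proof uses maximality in an essential way. Assuming some $F\in\F$ is disjoint from $Y$, it looks at the subfamily $\F^Y$ of sets containing $Y$ but not $x$, observes that $Y\notin\cQ(\F^Y)$ (since $\cQ(\F)$ is intersecting), picks an auxiliary element $z\notin Y$ from some set of $\cQ(\F^Y)$, and builds the family $\F'$ of all sets containing $Z=Y\cup\{z\}$ together with all sets containing $x$ and meeting $Z$. A counting comparison then shows $|\F'\setminus\F|$ is of strictly larger order than $|\F\setminus\F'|$, contradicting maximality of $\F$. You will need to bring maximality into your argument in a similarly quantitative way.
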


\begin{proof}
Note that every $F\in\F$ with $x\notin\Q\left(F\right)$ must
have $Y\subseteq\Q\left(F\right)$ in order for $\Q\left(F\right)$ to intersect everything in $\cQ\left(\F^{*}\right)$. Moreover, there must be at least one such $F$ because $\F$ is non-trivially intersecting.

We now claim that every $F\in \F$ intersects $Y$. Let $\F^Y\subset \F$ be the non-empty subfamily of sets $F\supseteq Y$ not containing $x$, and assume for the purpose of contradiction that there is a set in $\F$ which does not intersect $Y$. This means that $\cQ(\F^Y)$ does not contain $Y$, because $\cQ(\F)$ is intersecting. Choose $z\notin Y$ in some set in $\cQ(\F^Y)$, in a part $r$ for which $n_r$ is as small as possible, and set $Z=Y\cup \{z\}$. Now consider the family $\F'$ consisting of all the sets in $\kunif$ which include $Z$, in addition to all the sets in $\kunif$ that contain $x$ and intersect $Z$. Note that this family is non-trivially intersecting. We will show that $|\F'|>|\F|$, contradicting maximality.

Observe that $\cQ(\F')$ includes $\cQ(\F^*)=\{\{x,y_1\},\dots,\{x,y_q\}\}$, so for each $F\in \F\setminus\F'$, the projection $\Q(F)$ has size at least 3. Moreover we claim that each such $\Q(F)$ contains some $w\notin Y$ appearing in some set in $\cQ(\F^Y)$. Indeed, this holds if $F\in \F^Y$ since $Y$ is not in $\cQ(\F^Y)$. Otherwise, if $F\in \F\setminus(\F'\cup\F^Y)$ then $F\cap Y=\emptyset$ yet $\Q(F)$ intersects every set in $\cQ(\F^Y)$. Note that the part of $w$ has size at least $n_{r}$ by the choice of $z$. Noting that if $k_1=1$ then $\Q(F)$ can contain at most one element from part 1, and recalling that there are only $O(1)$ different possibilities for $\Q(F)$ among $F\in \F\setminus\F'$, we have
\[
\left|\F\setminus\F'\right|=
\begin{cases}
O\left(\left(\prod_{s}n_{s}^{k_{s}}\right)/\left(n_1^2n_r\right)\right)&\text{if $k_1>1$;}\\
O\left(\left(\prod_{s}n_{s}^{k_{s}}\right)/\left(n_1n_2n_r\right)\right)&\text{otherwise.}
\end{cases}
\]
Recall that if $k_1>1$ then $n_t=O(n_1)$, and if $k_1=1$ then $n_t=O(n_2)$. It follows that $\left|\F\setminus\F'\right|=o\left(\left(\prod_{s}n_{s}^{k_{s}}\right)/\left(n_{t}n_{r}\right)\right)$. On the other hand,
\[
|\F'\setminus \F|\ge \left|\left\{F\in\kunif:\Q(F)=\{x,z\}\right\}\right|=\Theta\left(\left(\prod_{s}n_{s}^{k_{s}}\right)/\left(n_{t}n_{r}\right)\right).
\]
So, $\F$ was not of maximum size, which is a contradiction.
\end{proof}
We have proved that every set $F\in\F$ has $x\in F$ and $F\cap Y\ne\emptyset$, or $Y\subseteq F$. Note that we must have $|Y|\ge2$ since $\F$ is non-trivially intersecting. By maximality, $\F$ must actually be the family of all $F\in \kunif$ satisfying $x\in F$ and $F\cap Y\ne\emptyset$, or $Y\subseteq F$, since this family is still non-trivially intersecting. If we had $|Y|=2$ then we would have $Y\in \cQ(\F^*)$, contradicting the fact that $\cQ(\F^*)=\{\{x,y_1\},\dots,\{x,y_q\}\}$. So, $|Y|>2$.

Note that $|\F|$ is entirely determined by $t$ and each $|\Q_s(Y)|$. Note that if $k_t=1$ then $|\Q_t(Y)|=0$, since $\{x,y\}\in \cQ(\F^*)$ for each $y\in Y$. Alternatively, if $k_t>1$, then we claim $|\Q_t(Y)|=k_t$. Suppose not, and let $z\notin Y\cup\{x\}$ be an element of part $t$. Let $\F'$ be the family of all $F\in \kunif$ such that $x\in F$ and $F\cap (Y\cup \{z\})\ne\emptyset$, or $Y\cup \{z\}\subseteq F$. We remark that this is a non-trivially intersecting family. Note that, since $|Y|>2$,
\[
|\F\setminus\F'|\le \left|\left\{F\in\kunif:Y\subseteq F\right\}\right|=
\begin{cases}
O\left(\left(\prod_{s}n_s^{k_{s}}\right)/n_1^3\right)& \text{if $k_1>1$,}\\
O\left(\left(\prod_{s}n_s^{k_{s}}\right)/(n_1n_2^2)\right)&\text{if $k_1=1$.}
\end{cases}
\]
On the other hand,
\[
|\F'\setminus\F|\ge\left|\left\{F\in\kunif:x\in F,\,F\cap (Y\cup \{z\})=\{z\}\right\}\right|=\Omega\left(\left(\prod_{s}n_s^{k_{s}}\right)/n_t^2\right).
\]
Recalling that $n_t=O(n_1)$ if $k_1>1$ and $n_t=O(n_2)$ otherwise, this contradicts the maximality of $\F$. We have proved that $|\Q_t(Y)|=k_t$.

Now, let $L_t$ be the set of sequences $\ll\in \prod_{s=1}^p\{0,\dots,k_s\}$ satisfying the following conditions:
\begin{itemize}
\item $\sum_s \ell_s\ge2$,
\item $\ell_t=0$ if $k_t=1$,
\item $\ell_t=k_t$ if $\sum_s \ell_s>2$ and $k_t>1$.
\end{itemize}
For each $\ll\in L_t$, define
\[
Y_\ll=\range{\ell_1}\sqcup\dots\sqcup\range{\ell_{t-1}}\sqcup\{2,\dots,\ell_t+1\}\sqcup\range{\ell_{t+1}}\sqcup\dots\sqcup\range{\ell_p},
\]
and let $\F_{t,\ll}$ be the family of all $F=\bigsqcup_s F_s\in \kunif$ such that $1\in F_t$ and $F\cap Y_\ll\ne \emptyset$, or $Y_\ll\subseteq F$. We remark that this family is shifted and non-trivially intersecting. Note that
\begin{align*}
|\F_{t,\ll}|=:M_{t,\ll}&=\binom{n_t-\ell_t-1}{k_t-\ell_t}\prod_{s\ne t} \binom{n_s-\ell_s}{k_s-\ell_s}+\binom{n_t-1}{k_t-1}\prod_{s\ne t} \binom{n_s}{k_s}-\binom{n_t-\ell_t-1}{k_t-1}\prod_{s\ne t} \binom{n_s-\ell_s}{k_s}.
\end{align*}
The significance of the families $\F_{t,\ll}$ is that $|\F|=M_{t,\ll}$ for $\ll\in L_t$ defined by $\ell_s=|\Q_s(Y)|$.

Observe that for any $S\subseteq\range p\setminus \{t\}$ as in the statement of \cref{thm:asymptotic}, we have $\F^\HM_{t,S}=\F_{t,\ll(S)}$, where $\ll(S)\in L_t$ is defined by
\[\ell_s(S)=
\begin{cases}
k_s&\text{if $k_t>1$ and $s=t$, or if $s\in S$;}\\
0&\text{otherwise.}
\end{cases}\]
In the remainder of this section we show that $M_{t,\ll}$ (as a function of $\ll\in L_t$) is maximized when $\ll=\ll(S)$ for some $S\subseteq\range p\setminus \{t\}$, or when $\sum_s\ell_s=2$. For any $s\in\range p\setminus \{t\}$ and any $\ell_1,\dots,\ell_{s-1},\ell_{s+1},\dots,\ell_p$, let $\ll(x)=(\ell_1,\dots,\ell_{s-1},x,\ell_{s+1},\dots,\ell_p)$ and define $f:x\mapsto M_{t,\ll(x)}$.
\begin{claim}
\label{claim:unimodal}
The function $-f$ is unimodal (that is, monotone nondecreasing then monotone nonincreasing).
\end{claim}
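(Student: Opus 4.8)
The plan is to write $f$ as an affine combination of two binomial coefficients plus a constant, compute its first difference by Pascal's identity, and show that this difference changes sign at most once, and only from negative to positive --- which is exactly the assertion that $-f$ is unimodal (monotone nondecreasing, then monotone nonincreasing).

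First I would fix $s\in\range p\setminus\{t\}$ together with all the other coordinates $\ell_{s'}$ (for $s'\ne s$), and abbreviate $n=n_s$, $k=k_s$. Pulling the factor depending on coordinate $s$ out of each of the three products in the displayed formula for $M_{t,\ll}$, one can write
\[
f(x)=A\binom{n-x}{k-x}-B\binom{n-x}{k}+C ,
\]
where $A=\binom{n_t-\ell_t-1}{k_t-\ell_t}\prod_{s'\ne t,s}\binom{n_{s'}-\ell_{s'}}{k_{s'}-\ell_{s'}}$ and $B=\binom{n_t-\ell_t-1}{k_t-1}\prod_{s'\ne t,s}\binom{n_{s'}-\ell_{s'}}{k_{s'}}$ and $C=\binom{n_t-1}{k_t-1}\prod_{s'\ne t}\binom{n_{s'}}{k_{s'}}$ do not depend on $x$. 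Since $1\le k_{s'}\le n_{s'}/2$ and $0\le\ell_{s'}\le k_{s'}$ for all $s'$, every binomial coefficient occurring in $A$ and $B$ is positive, so $A,B>0$.

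Next I would compute the discrete derivative. Using $\binom{n-x}{k-x}=\binom{n-x}{n-k}$ and two applications of Pascal's identity,
\[
f(x+1)-f(x)=-A\binom{n-x-1}{k-x}+B\binom{n-x-1}{k-1}=\binom{n-x-1}{k-1}\bigl(B-A\,R(x)\bigr),
\]
where $R(x):=\binom{n-x-1}{k-x}\big/\binom{n-x-1}{k-1}$ and the outer factor $\binom{n-x-1}{k-1}$ is positive because $n-x-1\ge k-1$. A short computation with factorials gives $R(x+1)/R(x)=(k-x)/(n-x-k)$, which is strictly less than $1$ whenever $n>2k$; this holds for $n_0$ large, so $R$ is strictly decreasing on $\{0,1,\dots,k-1\}$. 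As $A>0$, the map $x\mapsto B-A\,R(x)$ is therefore strictly increasing there, and so is $f(x+1)-f(x)$ up to multiplication by the positive factor $\binom{n-x-1}{k-1}$. A strictly increasing sequence changes sign at most once, and only from negative to positive, so $\{x:f(x+1)<f(x)\}$ is an initial segment of $\{0,1,\dots,k-1\}$. Hence $f$ is nonincreasing and then nondecreasing on $\{0,\dots,k\}$; equivalently $-f$ is unimodal, as claimed.

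The only non-routine point is deciding what to monotonize: it is tempting to try to show $f$ is convex, but that fails, because the term $-B\binom{n-x}{k}$ is concave. The right move is to factor the first difference as above, so that its sign is governed by the single scalar ratio $R(x)$, and then verify that $R$ (rather than $f$ itself) is monotone; everything else is bookkeeping with binomial identities.
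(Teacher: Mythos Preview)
Your proof is correct, but it takes a genuinely different route from the paper's. The paper substitutes $y=n_s-x$ to rewrite $-f$ (up to an affine change) as $g_{b,d,\gamma}(y)=\binom{y}{b}-\gamma\binom{y}{b+d}$ with $b=k_s$ and $d=n_s-2k_s\ge 0$, and then proves unimodality of $g_{b,d,\gamma}$ by induction on $b$: the key identity is $\Delta g_{b,d,\gamma}=g_{b-1,d,\gamma}$, and the base case $b=1$ follows from convexity of $y\mapsto\binom{y}{d+1}$. You instead stay with $f$ itself, compute the first difference directly, and factor it as a positive binomial coefficient times $B-A\,R(x)$; the whole argument then reduces to checking that the single ratio $R(x)=\binom{n-x-1}{k-x}\big/\binom{n-x-1}{k-1}$ is monotone, which you verify by computing $R(x+1)/R(x)=(k-x)/(n-k-x)$. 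Your approach is shorter and avoids induction; the paper's approach is slightly more general (it proves unimodality of the whole family $g_{b,d,\gamma}$ for all $b,d,\gamma$, not tied to the specific $A,B$ at hand). One very minor remark: you invoke $n>2k$ to get strict monotonicity of $R$, citing that $n_0$ is large; in fact even the borderline $n=2k$ (where $R$ is constant) would do, since then $B-AR(x)$ is constant and $f$ is monotone, which is still unimodal.
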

\begin{proof}
Note that $f(x)$ is of the form $\alpha \binom{n_{s}-x}{a}-\beta\binom{n_{s}-x}{b}+c$ for $a\ge b\ge 1$ ($a= n_s-k_s$, $b=k_s$), $\alpha,\beta> 0$, and some $c\in \NN$. It suffices to prove that for all $b\ge 1$, $d\ge 0$ and $\gamma>0$, the function $g_{b,d,\gamma}:\NN\mapsto\mathbb R:y\mapsto \binom{y}{b}-\gamma\binom{y}{b+d}$ is unimodal. We can do this by induction on $b$, noting that the case $b=1$ follows from convexity of the function $y\mapsto\binom{y}{d+1}$. Let $\Delta$ be the difference operator, meaning that $\Delta h(y)=h(y+1)-h(y)$; we can compute $\Delta g_{b,d,\gamma}(y)=g_{b-1,d,\gamma}(y)$. Note that $g_{b-1,d,\gamma}(0)=0$ if $b>1$, so if $g_{b-1,d,\gamma}$ is unimodal then it is nonnegative until it reaches its maximum, after which point it is monotone nonincreasing. This means that if $g_{b-1,d,\gamma}$ ever becomes negative then it will never become positive after that point. That is to say, unimodality of $g_{b,d,\gamma}$ follows from unimodality of $g_{b-1,d,\gamma}=\Delta g_{b,d,\gamma}$.
\end{proof}

\cref{claim:unimodal} implies that $M_{t,\ll}$ is maximized on the boundary of $L_t$, implying that $\ell_s\in \{0,k_s\}$ for all $s\ne t$, or $\sum_s\ell_s=2$. Indeed, otherwise we could increase or decrease some $\ell_s$ without decreasing the value of $M_{t,\ll}$.
First consider the case where $\ell_s\in \{0,k_s\}$ for $s\ne t$, and $\sum_s\ell_s>2$. Since $\ll\in L_t$, we have $\ell_t=k_t$ if $k_t>1$, and $\ell_t=0$ if $k_t=1$. Therefore, $\ll=\ll(S)$ for $S$ being the set of all $s\ne t$ such that $\ell_s=k_s$. This means that $|\F|\le M_{t,\ll(S)}=M^\HM_{t,S} \le M^{\max}$. Next consider the case that $M_{t,\ll}$ is maximized for some $\ll=\ll^{\max}\in L_t$ satisfying $\sum_s \ell^{\max}_s=2$. Recall that $\F_{t,\ll^{\max}}$ is a shifted non-trivially intersecting family of maximum size. Moreover note crucially that in this case $\cQ(\F^*_{t,\ll^{\max}})$ is non-trivially intersecting. We will show in the next subsection that this implies $|\F_{t,\ll^{\max}}|\le M^{\max}$.

\subsection{Case 2: $\cQ\left(\F^{*}\right)$ is non-trivially intersecting}
\label{subsec:non-trivial}

If $\cQ(\F^*)$ is not trivially intersecting, it must consist of three sets of the form $\{x,y\},\{x,z\},\{y,z\}$. Therefore, by \cref{lem:alpha-shifted}, every set in $\F$ must contain at least two of $x,y,z$, and by maximality we can assume $\F$ is in fact the family consisting of every possible set in $\kunif$ with this property. Suppose without loss of generality that $x,y,z$ appear in parts $1,\dots,q$ (so $q\le 3$, with $q=3$ only when $x,y,z$ are in different parts). Let $\F'\subseteq\prod_{s\le q}\binom{\range{n_s}}{k_s}$ be the $q$-part family consisting of all sets containing at least two of $x,y,z$, so that
\[
|\F|=|\F'|\prod_{s>q}\binom{n_s}{k_s}.
\]
It suffices to prove that $|\F'|\le M^{\max}(n_1,\dots,n_q,k_1,\dots,k_q)$, because
\[
M^\HM_{t,S}(n_1,\dots,n_p,k_1,\dots,k_p)=M^\HM_{t,S}(n_1,\dots,n_q,k_1,\dots,k_q)\prod_{s>q}\binom{n_s}{k_s}
\]
for any $t\in [q]$ and $S\subseteq [q]\setminus\{t\}$.
In the case where $q=1$, this fact follows immediately from the Hilton-Milner theorem, because $\F'\subseteq\binom{\range{n_1}}{k_1}$ is a non-trivially intersecting single-part family and $M^{\max}(n_1,k_1)=M^\HM(n_1,k_1)$ is the Hilton-Milner bound.

The remaining cases $q=2$ and $q=3$ will require some rather tedious calculations. We write $f\sim g$ to denote $f=(1+o(1))g$ and we write $f\gtrsim g$ to denote $f\ge (1+o(1))g$. Recall that all asymptotics are taken as $n_0\to \infty$.

First, consider the case where $q=2$. Suppose without loss of generality that $x,y$ are in part 1 (and so $k_1\ge 2$) and $z$ is in part 2. Then, we compute
\begin{align*}
|\F'|&\sim \frac{n_{1}^{k_{1}}n_{2}^{k_{2}}}{k_1!k_2!}\left(\frac{k_1(k_1-1)}{n_1^2}+\frac{2k_1k_2}{n_1n_2}\right),\\
M^\HM_{1,\{2\}}(n_1,n_2,k_1,k_2)&\sim \frac{n_{1}^{k_{1}}n_{2}^{k_{2}}}{k_1!k_2!}\left(\frac{k_1^2(k_1-1)}{n_1^2}+\frac{k_1k_2^2}{n_1n_2}\right),\\
M^\HM_{2,\{1\}}(n_1,n_2,k_1,k_2)&\sim 
\begin{cases}
\frac{n_{1}^{k_{1}}n_{2}^{k_{2}}}{k_1!k_2!}\left(\frac{k_2^2(k_2-1)}{n_2^2}+\frac{k_1^2k_2}{n_1n_2}\right)&\text{if $k_2>1$}\\
\frac{n_{1}^{k_{1}}n_{2}^{k_{2}}}{k_1!k_2!}\cdot\frac{k_1^2k_2}{n_1n_2} + n_2&\text{if $k_2=1$}.
\end{cases}
\end{align*}
 
Suppose now that $k_2>1$. If $n_2$ is much smaller than $n_1$ (say $10k_1^{10}k_2^{10}n_2\le n_1$) then ${k_2^2(k_2-1)}/{n_2^2}$ is much larger than ${k_1(k_1-1)}/{n_1^2}+2{k_1k_2}/{(n_1n_2)}$ and therefore $M^\HM_{2,\{1\}}>|\F'|$. Otherwise, $n_1=O(n_2)$, so $1/n_1^2 =\Omega(1/(n_1n_2))$ and recalling that $k_1,k_2\ge 2$, we have
\[
M^\HM_{1,\{2\}}\gtrsim\frac{n_{1}^{k_{1}}n_{2}^{k_{2}}}{k_1!k_2!}\left(\frac{2k_1(k_1-1)}{n_1^2}+\frac{2k_1k_2}{n_1n_2}\right)=(1+\Omega(1))|\F'|>|\F'|.
\]
Alternatively, suppose that $k_2=1$. If $|\F'|>M^\HM_{1,\{2\}}$ then
\[\frac{k_1(k_1-1)}{n_1^2}+\frac{2k_1}{n_1n_2}\gtrsim \frac{k_1^2(k_1-1)}{n_1^2}+\frac{k_1}{n_1n_2},\]
implying that $1/n_2\gtrsim(k_1-1)^2/n_1$. However, if $|\F'|>M^\HM_{2,\{1\}}$ then
\[\frac{k_1(k_1-1)}{n_1^2}+\frac{2k_1}{n_1n_2}\gtrsim \frac{k_1^2}{n_1n_2},\]
implying that $(k_1-1)/n_1\gtrsim(k_1-2)/n_2$. So, $(k_1-1)/n_1\gtrsim (k_1-2)(k_1-1)^2/n_1$, which is a contradiction unless $k_1=2$. But in the case $k_1=2,\,k_2=1$, note that the definitions of $\F'$ and $\F^\HM_{2,\{1\}}$ are the same up to a permutation of the ground set and so $|\F'|=M^\HM_{2,\{1\}}$.

Next, consider the case where $q=3$. We compute
\begin{align*}
|\F'|&\sim \frac{n_{1}^{k_{1}}n_{2}^{k_{2}}n_{3}^{k_{3}}}{k_1!k_2!k_3!}\left(\frac{k_1k_2}{n_1n_2}+\frac{k_1k_3}{n_1n_3}+\frac{k_2k_3}{n_2n_3}\right),\\
M^\HM_{t,\range{3}\setminus\{t\}}(n_1,n_2,n_3,k_1,k_2,k_3)&\gtrsim \frac{n_{1}^{k_{1}}n_{2}^{k_{2}}n_{3}^{k_{3}}}{k_1!k_2!k_3!}
\cdot\frac{k_t}{n_t}\sum_{s\ne t} \frac{k_s^2}{n_s}
\end{align*}
for $t\in \{1,2,3\}$. We then consider the weighted average
\begin{align*}
M{^\text{avg}}:=\sum_{t\in \{1,2,3\}} \frac{\sum_{s\ne t} k_s}{2\sum_s k_s}M^\HM_{t,\range{3}\setminus\{t\}}
&\gtrsim \frac{n_{1}^{k_{1}}n_{2}^{k_{2}}n_{3}^{k_{3}}}{k_1!k_2!k_3!} \sum_{t\in \{1,2,3\}} \frac{k_t}{n_t}\sum_{s\ne t} \frac{k_s^2}{n_s}\\
&=\frac{n_{1}^{k_{1}}n_{2}^{k_{2}}n_{3}^{k_{3}}}{k_1!k_2!k_3!}\sum_{t\in \{1,2,3\}} \left(\frac{\sum_{s\ne t}k_s^2+k_t\sum_{s\ne t}k_s}{2\sum_s k_s}\prod_{s\ne t}\frac{k_s}{n_s}\right).
\end{align*}
(Note that the simplification for the second line is obtained by expanding the outer summation as well as the inner one).
Now, define the function $f:(\ZZ^+)^3\to \RR$ by $$(k_1,k_2,k_3)\mapsto {(k_1^2+k_2^2+k_1k_3+k_2k_3)}/{(2k_1+2k_2+2k_3)}.$$ One can show that $f(k_1,k_2,k_3)\ge 11/10=1+\Omega(1)$ unless two of $k_1,k_2,k_3$ are equal to 1. So, if at most one of $k_1,k_2,k_3$ is equal to 1 then $M{^\text{avg}}=(1+\Omega(1))|\F'|>|\F'|$, implying that at least one of the bounds $M^\HM_{t,\range{3}\setminus\{t\}}$ is bigger than $|\F'|$.

It remains to consider the case where at least two of $k_1,k_2,k_3$ are equal to 1. Without loss of generality, say $k_2=k_3=1$. Then, $|\F'|>M^\HM_{2,\{1,3\}}$ implies $1/(n_1n_3)\gtrsim (k_1-1)/(n_1n_2)$, while $|\F'|>M^\HM_{3,\{1,2\}}$ implies $1/(n_1n_2)\gtrsim (k_1-1)/(n_1n_3)$. These inequalities cannot simultaneously be satisfied unless $k_1\le2$. If $k_1=k_2=k_3=1$ then note that $\F'$ and $\F^\HM_{1,\{2,3\}}=\F^\HM_{2,\{1,3\}}=\F^\HM_{3,\{1,2\}}$ are the same up to a permutation of the ground set, so $|\F'|=M^\HM_{1,\{2,3\}}$. Finally, if $k_1=2$, $k_2=k_3=1$, assume without loss of generality that $n_2\le n_3$. Using the definitions of $|\F'|$ and $M^\HM_{2,\{1,3\}}$ we can directly compute
\begin{align*}
M^\HM_{2,\{1,3\}}-|\F'|
&=\left(\binom{n_1}{2}n_3-\binom{n_1-2}{2}(n_3-1)+(n_2-1)\right)\\
&\qquad-\left((n_1-1)(n_3-1)+(n_1-1)(n_2-1)+\binom{n_1}{2}\right)\\
&=(n_3-n_2)(n_1-2)\ge 0.
\end{align*}

\section{Concluding remarks}
In this paper we have investigated a natural multi-part generalization of the Hilton-Milner problem, solving this problem in the case where the parts are large. We found that, surprisingly, the extremal families need not be of ``product'' type.

There are a few immediate questions that remain open. First is the question of whether the bound in \cref{thm:asymptotic} remains valid when we do not make the assumption that the parts are large. As an intermediate problem, one might hope to prove a version of \cref{thm:asymptotic} where each $n_s$ is only assumed to be large relative to its corresponding $k_s$, not necessarily relative to all the $k_s$ at once.

Second, there is the question of characterizing the extremal families. It is certainly not true that the families $\F_{s,S}^\HM$ are the only extremal families up to isomorphism; under certain circumstances families of the type discussed in \cref{subsec:non-trivial} may also be of maximum size. We imagine that the proof of \cref{thm:asymptotic} can be adapted to characterize the extremal shifted families, when the part sizes are large, but it is less clear how to deal with potential non-shifted extremal families.

Finally, we think that the idea of adapting extremal theorems to a multi-part setting is interesting in general, and several natural problems remain unexplored. For example, we could ask for a multi-part generalization of Ahlswede and Khachatrian's celebrated complete intersection theorem~\cite{AK97}: what is the maximum size of an intersecting family $\F\subseteq\kunif$ such that every $F,F'\in \F$ intersect in at least $t$ elements? Ahlswede, Aydinian and Khachatrian~\cite{AAK98} studied and resolved a slightly different problem where the intersection sizes are restricted ``locally'' on a per-part basis; this gives rise to extremal families of product type, but we suspect the ``global'' problem might have a richer solution. We remark that in the setting of \cref{thm:k1} where $n_1,\dots,n_s=n$ and $k_1=\dots=k_p=1$, this problem is equivalent to the question of finding the largest possible set of length-$p$ strings over an alphabet of size $n$ with Hamming diameter at most $n-t$. This problem was solved independently by Ahlswede and Khachatrian~\cite{AK98} and Frankl and Tokushige~\cite{FT99}.

\textbf{Acknowledgement.} We would like to thank Peter Frankl for insightful discussions following an earlier version of this paper. In particular, he suggested that the notion of a \emph{kernel} (originally introduced in \cite{Fra78} with the name \emph{base}), could be applied to this problem, playing a similar role to our ``projected'' families $\cQ(\F)$ and giving an alternative proof of \cref{thm:asymptotic}. It would be interesting to see whether this alternative approach has advantages over ours, for example for removing or reducing the large-part-sizes constraint and for characterizing the extremal families.

\end{document}